\renewcommand{\leq}{\leqslant}
\renewcommand{\geq}{\geqslant}
\newcommand{\sep}{\wr}
\newtheorem*{notation}{Notation}
\newenvironment{proof*}[1]
  {%
   \begin{proof}}
  {\end{proof}}
\newcommand{\PSPACE}{\ensuremath{\mathsf{PSPACE}}}
\newcommand{\NSPACE}{\ensuremath{\mathsf{NSPACE}}}
\newcommand{\DG}{Dahmani and Guirardel}
\newcommand{\N}{\mathbb N}
\title[Solutions to equations in  hyperbolic groups]{Solutions sets to systems of equations in hyperbolic groups 
are EDT0L in PSPACE}
\author{Laura Ciobanu}
\address{School of Mathematical and Computer Sciences,
 Heriot-Watt University, 
 Edinburgh EH14 4AS,
 Scotland}
\email{l.ciobanu@hw.ac.uk}
\author{Murray Elder}
\address{University of Technology Sydney, Ultimo NSW 2007, Australia}
\email{murray.elder@uts.edu.au}
\date{\today}
\newtheorem{theorem}{Theorem}
\newtheorem{proposition}[theorem]{Proposition}
\newtheorem{lemma}[theorem]{Lemma}
\newtheorem{corollary}[theorem]{Corollary}
\theoremstyle{defn}
\newtheorem{definition}[theorem]{Definition}
\newtheorem{remark}[theorem]{Remark}
\keywords{Hyperbolic group, Diophantine problem, existential theory, EDT0L, {\ensuremath{\mathsf{PSPACE}}}}
\thanks{Research supported by Australian Research Council (ARC) Project DP160100486,   EPSRC grant EP/R035814/1 and a Follow-On Grant from the International Centre of Mathematical Sciences (ICMS), Edinburgh}
\begin{document}

\maketitle

\begin{abstract}
We show that the full set of solutions to systems of equations and inequations in a hyperbolic group, with or without torsion,
 as shortlex geodesic words, 
  is  an EDT0L language whose 
  specification can be  computed in \NSPACE$(n^2\log n)$ for the torsion-free case and \NSPACE$(n^4\log n)$ in the torsion case. 
  Our work combines deep geometric results by Rips, Sela, \DG\ on decidability of existential theories of  hyperbolic groups, work of computer scientists including Plandowski, Je\.z, Diekert and others  on \PSPACE\ algorithms to solve equations in free monoids and groups using compression, and an intricate language-theoretic analysis.

The present work gives an essentially optimal formal language description for all solutions in all hyperbolic groups, and an explicit and surprising low space complexity to compute them.
\end{abstract}

\section{Introduction}

Hyperbolic groups were introduced by Gromov in 1987 \cite{Gromov}, and play a significant role in group theory and geometry \cite{isomorphismDG,definable,SelaElem}. Virtually free groups, small cancellation groups, and the fundamental groups of extensive classes of negative curvature manifolds are important examples (see  \cite{MSRInotes} for background). 
In a certain probabilistic sense made precise in \cite{GromovRandom,OlS,Sil}, almost all finitely generated groups are 
hyperbolic. They admit very  efficient solutions to the word and conjugacy problems \cite{EpHoltConj,realtime, HoltLS}, and extremely nice language-theoretic properties, for example the set of all geodesics 
 over any generating set is regular (see Lemma~\ref{prop:reg-hyp}), and forms a biautomatic structure \cite{WordProc}. They are exactly the groups which admit context-free multiplication tables \cite{GilmanHyp}, and have a particularly simple characterisation in terms of rewriting systems \cite{Cannon, Lys} (see Lemma~\ref{lem:Dehn}).

In this paper we consider 
systems of
 equations and inequations in hyperbolic groups,
 building on and generalising work recently done 
 in the area of solving equations over various groups and monoids in  \PSPACE. Starting with work of Plandowski  \cite{Plandowski}, many prominent researchers have given \PSPACE\ algorithms \cite{CDE, DEicalp,dgh01, DiekJezK,MR3571087, Jez2,Jez1} to find (all) solutions to systems of equations  
 over free monoids, free groups, partially commutative monoids and groups, and virtually free groups (that is, groups which have a free subgroup of finite index). 

The satisfiability of equations over torsion-free hyperbolic groups is decidable by the work of Rips and Sela \cite{RS95}, who reduced the problem in hyperbolic groups to solving equations in free groups, and then calling on Makanin's algorithm \cite{mak83a}. 
Kufleitner proved  \PSPACE\ for decidability  in the torsion-free case \cite{DIP-1922}, without an explicit complexity bound, by following Rips-Sela and then using Plandowski's result  \cite{Plandowski}.  
\DG\ radically extended Rips and Sela's work to all
 hyperbolic groups (with torsion), by reducing systems of equations to systems over virtually free groups, which they then reduced to systems of {\em twisted} equations over free monoids \cite{DG}.
In terms of describing solution sets, 
 Grigorchuck and Lysionok gave efficient algorithms for the special case of quadratic equations   \cite{GLquadratic}.

Here we combine  Rips, Sela, \DG's approach with recent work of the authors with Diekert \cite{CDE,DEicalp,DEarxiv} to obtain the following results. \begin{theorem}[Torsion-free]\label{thmTorsionFree}
Let $G$ be a torsion-free hyperbolic group with finite symmetric generating set $S$. 
Let $\Phi$ be a system of equations and inequations
of size $n$ (see Section~\ref{sec:notationSolns} for a precise definition of  input size). 
Then the set of all solutions, as tuples of shortlex geodesic words over $S$, 
is EDT0L. Moreover there is 
 an $\NSPACE(n^2\log n)$ algorithm which on input $\Phi$ prints a description for the EDT0L grammar.
\end{theorem}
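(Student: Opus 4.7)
The plan is to reduce the problem from $G$ to a free group $F$ via Rips--Sela's canonical representative construction, invoke the free-group EDT0L-in-\PSPACE\ machinery developed by the authors with Diekert~\cite{CDE,DEicalp,DEarxiv}, and finally normalise solutions to shortlex geodesics using the regularity of geodesics and the Dehn-style rewriting system provided by Lemmas~\ref{prop:reg-hyp} and~\ref{lem:Dehn}.

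First I would invoke Rips--Sela: any solution $(g_1,\ldots,g_k)$ to $\Phi$ in $G$ lifts, via canonical representatives, to a tuple over a free basis of $F$ that projects onto the $g_i$ and satisfies one of finitely many derived systems $\Phi'$ with rational constraints encoding the cancellation pattern and the geometric case. The size of each $\Phi'$ and of the NFAs realising the constraints must be controlled polynomially in $n$; this is the classical reduction used by Kufleitner~\cite{DIP-1922} to obtain decidability in \PSPACE, but I would need to carry it out with an explicit space bound and with rational constraints in the precise format consumed by the free-group solver downstream.

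Next, for each such $\Phi'$ I would apply the EDT0L-in-\PSPACE\ algorithm for equations with rational constraints in free groups from~\cite{CDE,DEicalp} to produce an EDT0L grammar whose language is the set of freely reduced solutions. Since EDT0L is closed under free-monoid morphisms, composing with $F \twoheadrightarrow G$ still yields an EDT0L description of solutions as (possibly non-geodesic) words over $S$. To force shortlex geodesic output, I would compose with a finite transducer implementing the Dehn reductions (Lemma~\ref{lem:Dehn}) and intersect with the regular language of shortlex geodesics (Lemma~\ref{prop:reg-hyp}); EDT0L is closed under both operations, so the final language remains EDT0L.

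The principal obstacle is meeting the stated $\NSPACE(n^2\log n)$ bound. None of the above constructions can be carried out on an explicit grammar, which may be exponentially large; instead, the Rips--Sela reduction, the morphism $F \twoheadrightarrow G$, the Dehn transducer, and the shortlex filter must all be \emph{encoded on the fly} into the compression-based grammar construction for $F$, by folding them into additional rational constraints that the free-group algorithm accepts as input. The delicate point is that the Rips--Sela step multiplies the number of variables and equations by a constant depending on the hyperbolicity constant of $G$, so to stay within the $n^2\log n$ budget one must verify that the free-group algorithm's space usage is essentially quadratic in the input size regardless of the number and description of the rational constraints, and that the shortlex/Dehn filter contributes only linear overhead on top.
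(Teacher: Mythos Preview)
Your overall architecture---Rips--Sela reduction to free groups, then CDE, then normalisation to shortlex---matches the paper's, but the normalisation step has a genuine gap. You assert that ``EDT0L is closed under both operations'' where one of the operations is composition with a finite transducer implementing Dehn reductions. This is false: EDT0L is \emph{not} closed under inverse homomorphism (see \cite{EhrenRozenInverseHomomEDT0L}, cited in the paper), hence not under rational transductions. Moreover, Dehn's algorithm as in Lemma~\ref{lem:Dehn} only decides triviality; it does not rewrite an arbitrary word to its shortlex geodesic representative, and simply intersecting a covering solution set with the regular language of shortlex geodesics throws away solutions rather than converting them.

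The paper circumvents exactly this obstacle. It uses that canonical representatives are $(\lambda,\mu)$-quasigeodesics (Proposition~\ref{canrep_qg}) and that the set of pairs $(u,v)\in Q_{S,\lambda,\mu}^2$ with $u=_G v$ is accepted by an asynchronous two-tape automaton (Proposition~\ref{prop:reg-hyp}). Conversion to shortlex then genuinely requires an inverse-homomorphism step, which only preserves ET0L. To recover EDT0L, the paper builds the covering solution set in the duplicated form $w\sep h(w)$ from the outset, applies the ET0L closure properties, intersects down to shortlex, and then invokes the Copying Lemma of Ehrenfeucht--Rozenberg (Proposition~\ref{prop:copyME}) to promote the result back to EDT0L. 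Your proposal is missing this entire mechanism.

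A secondary point on complexity: the Rips--Sela bound $\rho$ on the lengths of the central words $c_{ji}$ is linear in the number $q\in O(n)$ of triangular equations, not a constant depending only on $\delta$. Thus each derived free-group system $\Phi_{\mathbf c}$ has size $O(n^2)$, and the $\NSPACE(n^2\log n)$ bound arises because CDE runs in $\NSPACE(m\log m)$ on input of size $m=O(n^2)$---not because CDE is ``essentially quadratic in the input size'' as you suggest.
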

\begin{theorem}[Torsion]\label{thmTorsion}
Let $G$ be a  hyperbolic group with torsion, with finite symmetric generating set $S$. 
Let $\Phi$ be a system of equations and inequations
of size $n$  (see Section~\ref{sec:notationSolns} for a precise definition of  input size). 
Then the set of all solutions, as tuples of shortlex geodesic words over $S$, 
 is EDT0L. Moreover there is 
 an $\NSPACE(n^4\log n)$ algorithm which on input $\Phi$  prints a description for the EDT0L grammar.
\end{theorem}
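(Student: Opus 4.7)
\begin{proof*}{Proof proposal for Theorem~\ref{thmTorsion}}
The plan is to parallel the torsion-free argument (Theorem~\ref{thmTorsionFree}) but route the input through the reduction of \DG\ \cite{DG} instead of that of Rips--Sela, and then to carefully account for the additional space cost this reduction incurs. At a high level, I would build a pipeline
$$
\text{equations over } G \;\longrightarrow\; \text{equations over a virtually free group } V \;\longrightarrow\; \text{twisted equations over a free monoid},
$$
solve the last problem with the compression machinery of Plandowski, Je\.z and Diekert as adapted in \cite{CDE,DEicalp,DEarxiv}, and then lift the EDT0L solution language back through each stage to an EDT0L set of shortlex geodesic tuples over $S$.

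The first step is to apply the \DG\ construction to transform $\Phi$ of size $n$ into a finite disjunction of systems $\Phi_i$ over a virtually free group $V$ associated to $G$, in such a way that the projection $V\to G$ sends the union of solution sets of the $\Phi_i$ onto the solution set of $\Phi$. This stage is the main source of the worse exponent: I would carefully bound the number and size of the $\Phi_i$ in terms of $n$, and keep the construction nondeterministic and on-the-fly so that only one $\Phi_i$ of polynomial size in $n$ is ever held in memory. The second step is to reduce each $\Phi_i$ to a twisted system over a free monoid with involution, as in \cite{DEicalp,DEarxiv}; this stage multiplies the size only by a constant depending on the finite data of $V$. The third step is to run the compression-based nondeterministic algorithm of \cite{CDE,DEicalp} on the twisted system, producing an EDT0L grammar whose language encodes all free-monoid solutions and simultaneously verifying satisfiability in \PSPACE.

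The main obstacle is the final language-theoretic lift: the EDT0L language produced from the twisted free-monoid system describes solutions written in preimages under $V\to G$, not as shortlex geodesics over $S$. To convert, I would compose the output grammar with two finite-state transducers: one realising the quotient map $V\to G$ on representative words, and a second implementing shortlex geodesic normalisation in $G$, which is finite-state by the regularity of geodesics in hyperbolic groups (Lemma~\ref{prop:reg-hyp}) and the Dehn-style rewriting system (Lemma~\ref{lem:Dehn}). Since EDT0L is closed under rational transductions coordinatewise, this yields an EDT0L description of the tuples of shortlex geodesic solutions of $\Phi$, as required.

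Throughout the pipeline I would maintain the invariants needed for the nondeterministic space bound: at every moment the algorithm stores only pointers into the current compressed system, a bounded amount of finite data from $G$, $V$, and the transducers, and the current production being written to the output tape. Tallying contributions, the \DG\ stage dominates and is bounded by $O(n^3)$ storage for the rewritten system, onto which the compression step adds the usual $O(n\log n)$ overhead of Plandowski-style algorithms, for a total of \NSPACE$(n^4\log n)$. The hardest point will be proving that the \DG\ reduction can be performed in this space budget while preserving surjectivity of solution sets, since the original construction in \cite{DG} is stated for decidability and does not, a priori, produce a grammar-friendly parametrisation of solutions; making the reduction constructive in a manner compatible with EDT0L closure properties is where most of the technical work will lie.
\end{proof*}
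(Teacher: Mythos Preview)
Your overall pipeline---triangulate, pass to the virtually free group $V$ of \DG, solve there via \cite{DEarxiv}, then lift back to shortlex geodesics over $S$---matches the paper. The space budget also arises for the same reason in the paper: after triangulation one has $q\in O(n)$ equations, each lifted system $\Psi_{\mathbf c}$ over $V$ has size $O(n^2)$, and the virtually-free solver of \cite{DEarxiv} runs in $\NSPACE(k^2\log k)$ on input of size $k$, giving $\NSPACE((n^2)^2\log(n^2))=\NSPACE(n^4\log n)$. Your accounting (``$O(n^3)$ for the rewritten system plus $O(n\log n)$ compression overhead'') does not match this and does not obviously multiply out to $n^4\log n$; you should redo it.

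There is, however, a genuine gap in the lifting step. You assert that ``EDT0L is closed under rational transductions coordinatewise'' and propose to post-compose with a transducer that normalises to shortlex geodesics. This is false: EDT0L is \emph{not} closed under inverse homomorphism \cite{EhrenRozenInverseHomomEDT0L}, hence not under rational transductions. The class that is closed under all of these is ET0L, so your pipeline would only yield an ET0L description of the shortlex solution set, not an EDT0L one. This is exactly the obstacle the paper has to work around.

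The paper's fix is to carry a doubled copy of the solution tuple throughout: the covering solution language is built to consist of words of the form $w_1\#\dots\#w_m\sep h(v_1)\#\dots\#h(v_m)$ with $w_i=_G v_i$ quasigeodesics (Proposition~\ref{prop:MAIN-torsion}). One then performs the quasigeodesic-saturation and shortlex-intersection steps as ET0L operations (Proposition~\ref{prop:shortlex-hyp}); after intersecting with the regular normal-form set $\mathcal T$, the two halves are forced to be \emph{identical} words, i.e.\ the language has the special shape $w\sep h(w)$. The Copying Lemma of Ehrenfeucht--Rozenberg (Proposition~\ref{prop:copyME}) then says that any ET0L grammar for a language of this shape can be made deterministic table-by-table, recovering EDT0L at no extra space cost. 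Your proposal is missing this mechanism entirely; without it the conclusion ``EDT0L'' does not follow.
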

A corollary of Theorems 1 and 2 is that the existential theory for hyperbolic groups can be decided in $\NSPACE(n^2\log n)$ for torsion-free and $\NSPACE(n^4\log n)$ for groups with torsion.  
Another consequence of our work is that we can decide in the same space complexity as above whether or not the solution set is empty, finite or infinite.

EDT0L is a surprisingly low language complexity for this problem. EDT0L languages are playing an increasingly  useful role in group theory, not only in describing solution sets to equations in  groups \cite{CDE,DEicalp,DiekJezK}, but more generally \cite{BEboundedLATA,BCEZ,CEF}.

The paper is organised  as follows. We briefly set up some notation for solution sets and input size in  Section~\ref{sec:notationSolns}. 
We then give an informal description of the entire argument for the torsion-free case in Section~\ref{sec:overview}. This overview uses various concepts which are defined more carefully afterwards, but we hope that having the entire argument in one place is useful for the reader to understand the `big picture' before descending into the details.
Section~\ref{sec:EDT0L} develops necessary material on EDT0L and space complexity. 
Section~\ref{sec:hyp-intro} covers the necessary background on hyperbolic groups, including the key step to obtain a full solution set (as tuples of  shortlex geodesics)  from a {\em covering solution set} (see Definition \ref{solutiondef}(iii)).
In 
Section~\ref{sec:torsionfree} we use Rips and Sela's \emph{canonical representatives} (see  Appendix \ref{sec:torsionfree})
 in torsion-free hyperbolic groups, to reduce the problem of finding solutions in a torsion-free hyperbolic group to finding solutions in the free group on the same generators as the hyperbolic one. We show that if the input system has size $n$ then the resulting system in the free group has size $O(n^2)$. Applying \cite{CDE} produces a  covering solution set in $O(n^2\log n)$ nondeterministic space, from which we obtain the 
 full set of solutions as  shortlex geodesics in the original group, as an EDT0L language, in the same space complexity.
 In Section~\ref{sec:torsion} we prove the general case for hyperbolic groups with torsion, following  \DG\ who construct canonical representatives in a graph containing the Cayley graph of the hyperbolic group, and working in an associated virtually-free group. 

 \section{Notations for equations and solution sets}\label{sec:notationSolns}
Let $G$ be a fixed  group with finite symmetric generating set $S$. Let $\pi\colon S^*\to G$ be the natural projection map.
Let $\{X_1, \dots, X_{m}\}$, $m\geq 1$, be a set of variables to which we adjoin their formal inverses $X_i^{-1}$ and denote by $\mathcal X$ the union $\{X_i, X_i^{-1} \mid 1 \leq i \leq m\}$. Let $\mathcal{C}=\{a_1, \dots, a_k\} \subseteq G$ be a set of constants and
\begin{equation}\label{system}
\Phi=\{\varphi_j(\mathcal X, \mathcal{C})=1\}_{j=1}^h \cup \{\varphi_j(\mathcal X, \mathcal{C})\neq1\}_{j=h+1}^s
\end{equation}
be a set of $s$ equations and inequations in $G$, where the length of each (in)equation is $l_i$. Then the total length of the equations is $n=\sum_{i=1}^s l_i$, and we take $|\Phi|=n$ as the input size in the remainder of the paper.

A tuple $(g_1,\dots, g_m)\in G^m$ \textit{solves} an equation  [resp. inequation] $\varphi_j$ in $\Phi$ if replacing each variable $X_i$ by $g_i$ (and $X_{i}^{-1}$ by $g_i^{-1}$) produces an identity [resp. inequality] in the group as follows:
$$\varphi_j(g_1, \dots, g_m, a_1, \dots, a_k)=1 \ [\text{resp. }\varphi_j(g_1, \dots, g_m, a_1, \dots, a_k)\neq1] .$$ A tuple $(g_1,\dots, g_m)\in G^m$ solves $\Phi$ if it simultaneously
solves $\varphi_j$ for all $1\leq j\leq s$.

\begin{definition}\label{solutiondef}
\begin{itemize}
\item[(i)] The {\em group element solution set} to $\Phi$ is the set $$\text{Sol}_G(\Phi)=\{(g_1,\dots, g_m) \in G^m \mid (g_1,\dots, g_m) \text{ solves } \Phi\}.$$
\item[(ii)] Let $T\subseteq S^*$ and $\#$ a symbol not in $S$.  The  {\em full set of $T$-solutions} is the set 
 $$\text{Sol}_{T,G}(\Phi)=\{w_1\#\dots\# w_m  \mid w_i \in T,  (\pi(w_1),\dots, \pi(w_m) ) \text{ solves  } \Phi\}.$$ 
\item[(iii)]
A set
$L\subseteq\{w_1\#\dots\# w_m \mid w_i \in S^*, 1\leq i \leq m\}$ is a {\em covering solution set} to $\Phi$ if  \[\{(\pi(w_1),\dots, \pi(w_k))\mid w_1\#\dots\# w_m \in L \}=\text{Sol}_G(\Phi).\] 
\end{itemize}
\end{definition}

\section{Overview of the proof}\label{sec:overview}

In a free group, the equation $xy=z$ 
 has a solution in reduced words (that is, words which do not contain factors $aa^{-1}$ for any $a\in S$) 
 if and only if there exist words $P,Q,R$  with $x=PQ, y=Q^{-1}R, z=PR$  in the free monoid with involution over $S$  (\cite[Lemma 4.1]{CDE}). In a hyperbolic group this direct reduction to cancellation-free equations is no longer true: a triangle $xy=z$ where $x,y,z$ are replaced by geodesics  looks as in Figure~\ref{subfig:geod}.

\begin{figure}[ht]
    \centering
    \begin{subfigure}[t]{0.45\textwidth}
        \centering
        \includegraphics[height=1.2in]{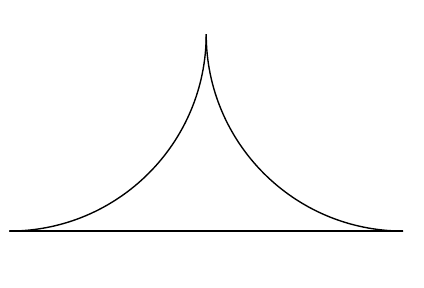}

        \caption{Using geodesics}\label{subfig:geod}
    \end{subfigure}%
    ~ 
    \begin{subfigure}[t]{0.45\textwidth}
        \centering
        \includegraphics[height=1.2in]{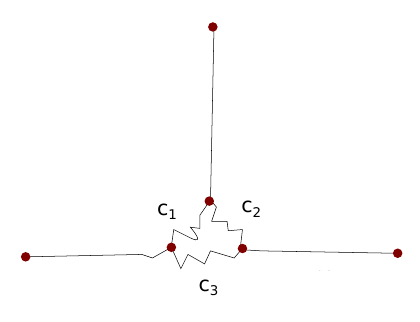}
        \caption{Using canonical representatives}\label{subfig:can}
    \end{subfigure}
    \caption{Solutions to $xy=z$ in the Cayley graph of a hyperbolic group.}\label{fig:triangle}
\end{figure}

Rips and Sela \cite{RS95} proved 
 that in a torsion-free hyperbolic group one can define certain special words called {\em canonical representatives} 
 so that a system of equations of the form $X_jY_j=Z_j, 1\leq j\leq O(n)$   has solutions which are canonical representatives with the properties that their prefixes and suffixes coincide, as shown in Figure~\ref{subfig:can}, and the inner circle is the concatenation of three  words with lengths in 
 $O(n)$.
Moreover, these canonical representatives are $(\lambda,\mu)$-quasigeodesics (Definition~\ref{defn:qg}) where the constants $\lambda, \mu$ depend only on the group. 

We use these facts to devise the following algorithm, presented here for the torsion-free case. We treat the hyperbolic group $G$ with finite generating set $S$ as a constant.
On input a system of equations and inequations as in (\ref{system}) of size $n$: 
\begin{enumerate}\item Replace inequations by equations
(by using a new variable and requiring that this variable is not trivial in the group, as explained in Section 6.3). 
\item Triangulate the system, so that all equations have the form $X_jY_j=Z_j$. The size of the resulting system is still in $O(n)$. Suppose there are $q\in O(n)$ such equations.
\item Enumerate, one at a time, all  
possible tuples $\mathbf c=(c_{11},c_{12},c_{13},\dots, c_{q1},c_{q2},c_{q3})$ of words (say, in lex order)  so that the length $\ell(c_{ji})$ with respect to $S$ is bounded by a constant  in $O(n)$. 
Note that the size of each tuple (the sum of the lengths of the $c_{ij}$) is in $O(n^2)$.

\item For each tuple $\mathbf c$,  run Dehn's algorithm to check $c_{j1}c_{j2}c_{j3}=_G1$ for $1\leq j\leq q$. If this holds for all $j$, write down a system of $3q$ equations $$X_j=P_jc_{j1}Q_j, Y_j=Q_j^{-1}c_{j2}R_j, Z_j=P_jc_{j3}R_j.$$
Note that the resulting system, $\Phi_{\mathbf c}$, has size in $O(n^2)$.
\item We now call  the algorithm of the authors and Diekert \cite{CDE} to find all solutions to $\Phi_{\mathbf c}$
 in the free group generated by $S$.
This algorithm, on input of size $O(n^2)$, runs in $\NSPACE(n^2\log n)$, and prints a description of the EDT0L grammar which generates all tuples of solutions  as reduced words in $S^*$.  Specifically it prints nodes and edges of a trim NFA which is the rational control for the EDT0L grammar (see Definition~\ref{def:et0lasfeld} below).
Modify the algorithm so that the  nodes printed
include the label $\mathbf c$ which has length $O(n^2)$ (so does not affect the complexity). 
\item Delete the current system stored, and move to the next tuple $\mathbf c$.
\item 
At the end, print out a new start node and  $\epsilon$ edges to the start node of the NFA for the system $\Phi_{\mathbf c}$ for all $\mathbf c$ already printed. 
\end{enumerate}

The  NFA that is printed gives an EDT0L grammar that generates a language of tuples which is a  covering solution to the original system in the hyperbolic group.
To obtain the full set of solutions as shortlex geodesic words we need to perform further steps.
 Using the facts that %
 canonical representatives are $(\lambda,\mu)$-quasigeodesics, and
\begin{itemize}\item 
the full set of $(\lambda,\mu)$-quasigeodesics, $Q_{S,\lambda,\mu}$\item  the set of all pairs $\{(u,v)\in Q_{S,\lambda,\mu}\mid u=_G v\}$ 
\item the set of all shortlex geodesics in $G$ 
\end{itemize} are all regular,
we can obtain from the covering solution an ET0L language, in the same space complexity (by Proposition~\ref{prop:closureET0L} below), which represents the full set of solutions in shortlex geodesic words. 
Then finally, because of the special form of our solutions, we can apply a version of the {\em Copying Lemma} of Ehrenfeucht and Rozenberg \cite{EhrenRozenCopyingEDT0L} to show that in fact the resulting language of shortlex representatives is EDT0L  in $\NSPACE(n^2\log n)$.

Details for handling the case of hyperbolic groups with torsion also follows this general scheme, however finding the analogue of canonical 
representatives is harder in this case, so further work is required, and we describe this in Section~\ref{sec:torsion}.

\section{E(D)T0L in \PSPACE}\label{sec:EDT0L}

\subsection{ET0L and EDT0L languages}

Let $C$ be an alphabet. 
A \emph{table} for $C$ is a finite subset of $C\times C^*$.
If $(c,v)$ is in some table $t$, we say that $(c,v)$ is a \emph{rule} for $c$.
A table $t$ is {\em deterministic} if for each $c\in C$ there is exactly one $v\in C^*$ with  $(c,v)\in t$.

If $t$ is a table and $u\in C^*$ then  we write 
$u\longrightarrow^t v$ to mean that $v$ is obtained by applying rules from $t$ to each letter of $u$. That is, $u=a_1\dots a_n$,  $a_i\in C$,  $v=v_1\dots v_n$, $v_i\in C^*$, and $(a_i,v_i)\in t$ for $1\leq i\leq n$.
If $H$ is a set of tables  and $r\in H^*$ then we write $u\longrightarrow^{r} v$ to mean that there is a sequence of words $u=v_0,v_1,\dots, v_n=v\in C^*$ 
such that $v_{i-1}\longrightarrow^{t_i} v_i$ for $1\leq i\leq n$ where $r=t_1\dots t_n$. If $R\subseteq H^*$ we write $u\longrightarrow^{R} v$ if $u\longrightarrow^{r} v$ for some $r\in R$.

\begin{definition}[\cite{Asveld}]\label{def:et0lasfeld}
	Let $\Sigma$ be an alphabet. We say that $L\subseteq \Sigma^*$ is an {\em ET0L} language if there is an alphabet $C$ with $\Sigma\subseteq C$, a finite set $H\subset \mathscr P(C\times C^*)$ of tables, 
	a regular language $R \subseteq H^*$ and a letter $c_0\in C$ such that
\begin{displaymath}
	L = \{ w \in \Sigma^* \mid c_0 \longrightarrow^R w\}. 
\end{displaymath}
In the case when every table $h \in R$ is deterministic, i.e. each $h \in R$ is in fact a homomorphism, we write $	L = \{ r(c_0) \in \Sigma^* \mid r\in R \}$
 and say that $L$ is {\em EDT0L}.
The set ${R}$ is called the {\em rational control}, the symbol $c_0$ the \emph{start symbol} and $C$ the {\em extended alphabet}.
\end{definition}

\subsection{Space complexity for E(D)T0L}

Let  $f\colon \N\to\N$ be a function.
Recall an algorithm is said to run in \NSPACE$(f(n))$ if it can be performed by a non-deterministic Turing machine with a read-only input tape, a write-only output tape, and a read-write work tape, with the work tape restricted to using $O(f(n))$ squares on input of size $n$.  We use the notation $L(\mathcal A)$ to denote the language  accepted by the automaton $\mathcal A$. The following definition formalises the idea of producing some E(D)T0L language (such as the solution set of some system of equations) in \NSPACE$(f(n))$, where the language is the output of a computation with input (such as a system of equations) of size $n$.

\begin{definition} Let 
 $\Sigma$ be a  (fixed) alphabet and  $f\colon \N\to\N$ a function. If there is an $\NSPACE(f(n))$ algorithm that on input $\Omega$ of size $n$ outputs the specification of an ET0L language $L_{\Omega}\subseteq \Sigma^*$, then we say that $L_{\Omega}$ is {\em ET0L in $\NSPACE(f(n))$}. 
 
 Here the specification of $L_{\Omega}$ consists of:
  \begin{itemize}\item
 an extended alphabet $C\supseteq \Sigma$,  
 \item a start symbol $c_0\in C$, 
 \item a finite list 
of nodes of a (trim) NFA $\mathcal A$, labeled by some data, some possibly marked as initial and/or final, 
\item  a finite list $\{(u,v,h)\}$  of edges of 
$\mathcal A$ where $u,v$ are nodes and $h\in \mathscr P(C\times C^*)$ is a 
 table
\end{itemize}
such that $L_{\Omega}=\{w\in \Sigma^*\mid c_0\to^{L(\mathcal A)} w\}$.

A language $L_{\Omega}$ is  {\em EDT0L 
 in
 $\NSPACE(f(n))$} if, in addition, every table $h$  labelling  an edge of $\mathcal A$ is deterministic.
 \end{definition}

 Note that the entire  print-out is not required to be in $O(f(n))$ space. 
Previous results of the authors with Diekert can now be restated as follows.
\begin{theorem}[{\cite[Theorem 2.1]{CDE}}] The set of all solutions to a system   of size $n$ of equations (with rational constraints),
as reduced words, in a free group is EDT0L in $\NSPACE(n\log n)$.
\end{theorem}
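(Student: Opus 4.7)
The plan is to adapt Je\.z's recompression technique for word equations in free monoids to the free-group setting, viewing $F(S)$ as the set of reduced words over $S \cup S^{-1}$ (that is, a free monoid with involution) together with a reducedness constraint on substitutions. Rational constraints on the variables can be absorbed by enlarging the state space: each variable $X_i$ is paired with states of the given NFA recognising its admissible values, and these states are tracked through the algorithm so that constrained solutions correspond exactly to accepting runs through the NFA product.

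The core algorithm nondeterministically iterates two types of compression on the current equation system: \emph{block compression}, replacing $a^k$ by a fresh letter $A_k$, and \emph{pair compression}, replacing selected occurrences of $ab$ by a fresh letter. Each step applies a deterministic morphism on an extended alphabet. Following Plandowski and Je\.z, one chooses the nondeterministic strategy so that the size of the compressed equation remains in $O(n)$ throughout, and after a bounded number of steps it reduces to a trivial equation. Reading the accumulated homomorphism backwards along an accepting run then yields a solution to the original system, and conversely every solution arises this way. Hence the solution set is exactly the image of a start symbol (the tuple of variables separated by $\#$) under a rational language of deterministic morphisms, which is the definition of an EDT0L language.

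To get the space bound, I would encode the rational control as an NFA whose states record the current compressed system together with the NFA-states tracking rational constraints: since the equation has length $O(n)$ over an alphabet of size $O(n)$, a state takes $O(n \log n)$ bits. The algorithm prints nodes and edges on the write-only tape as it discovers them, so the work tape only ever holds the current state, the proposed next state, and the table labelling the edge between them, all in $O(n \log n)$ space. The nondeterministic computation accepts iff there exists a sequence of compressions reducing the equation to the trivial one.

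The main obstacles are two-fold. First, the interaction between compression and the involution: one must compress a letter and its inverse consistently, and preserve reducedness (no newly created factors $aa^{-1}$), which requires pairing compression steps on inverse letters and imposing side-conditions on which pair compressions are admissible at each step. Second, ensuring the tables are \emph{deterministic} (EDT0L rather than ET0L): all nondeterminism must be pushed into the choice of transition in the rational control, not into the tables themselves. This is arranged by making the assignment of fresh letters a function of the current state, so that once the state and its successor are fixed, the morphism labelling the edge is uniquely determined.
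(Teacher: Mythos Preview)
The paper does not prove this statement: it is quoted verbatim as \cite[Theorem~2.1]{CDE} and used as a black box (the ``CDE algorithm'') in the proof of Proposition~\ref{prop:MAIN-torsionfree}. There is therefore nothing in the present paper to compare your argument against.

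That said, your sketch is an accurate outline of how the cited result is actually established in \cite{CDE}: recompression in a free monoid with involution, rational constraints carried as automaton states on the variables, nondeterminism confined to the choice of transition so that each edge label is a genuine endomorphism (hence EDT0L rather than ET0L), and the $O(n\log n)$ space coming from storing a compressed equation of length $O(n)$ over an alphabet of size $O(n)$. The two technical points you flag --- compatibility of compression with the involution and keeping the tables deterministic --- are exactly the issues that have to be handled there. So your proposal is correct in spirit, but it is a proof of the cited theorem, not of anything argued within this paper.
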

\begin{theorem}[{\cite[Theorem 45]{DEarxiv}}] The set of all solutions to a system of size $n$ of equations  (with  rational constraints),  
as  words in a particular quasigeodesic normal form over a certain finite generating set, in a virtually free group is EDT0L in $\NSPACE(n^2\log n)$. 
\end{theorem}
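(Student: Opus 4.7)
The plan is to reduce the system in the virtually free group $G$ to an equivalent system of equations with rational constraints over a free subgroup, invoke the EDT0L theorem for free groups \cite[Theorem 2.1]{CDE} on the reduced system, and translate the resulting EDT0L grammar back into the chosen quasigeodesic normal form for $G$.

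First I would use the fact that every virtually free group contains a free normal subgroup $F$ of finite index $d$. Fix a coset transversal $T = \{t_1,\ldots, t_d\}$ with $t_1 = 1$. Every $g \in G$ has a unique decomposition $g = f \cdot t_c$ with $f \in F$ and $t_c \in T$, and multiplication in $G$ is governed by a $2$-cocycle $\sigma\colon T \times T \to F$ together with the conjugation action of $T$ on $F$. Take the generating set $S = \hat{S} \cup T$, where $\hat{S}$ is a free basis for $F$; a natural quasigeodesic normal form is then a reduced word over $\hat{S}$ followed by a single letter of $T$. For each variable $X_i$ in the input system I would introduce a variable $Y_i$ ranging over $F$, nondeterministically guess a coset label $c_i \in \{1,\ldots,d\}$, and impose $X_i = Y_i t_{c_i}$.

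Substituting into each equation and using the cocycle and conjugation action to push transversal letters to the right produces, from each original equation, one equation in $F$ (using fresh variables for conjugates of the $Y_i$'s) together with a bookkeeping condition on the cosets. I would triangulate the resulting system and encode the coset bookkeeping as a rational constraint by intersecting with a regular language whose NFA has $O(d)$ states. Each substitution introduces cocycle constants of bounded length, and expressing the conjugates of the $F$-variables by increasingly long products of $T$-letters as fresh $F$-variables can be organised with sharing so that the total system size stays in $O(n^2)$. Applying \cite[Theorem 2.1]{CDE} to this system of size $O(n^2)$ with rational constraints then produces, in $\NSPACE(n^2 \log n)$, an EDT0L grammar whose generated tuples are exactly the reduced-word solutions for the $Y_i$'s. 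To recover solutions in the quasigeodesic normal form over $S$, I would post-process the grammar by appending the guessed letter $t_{c_i}$ after each $Y_i$-solution and intersecting with the regular language of well-formed normal forms. Since EDT0L is closed under intersection with regular languages and under letter-to-word homomorphisms, and both operations amount to modifying the rational control NFA, the space bound is preserved.

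The hardest part will be controlling the quadratic blowup. Conjugation by a product of transversal letters turns an $F$-word into its image under a composition of finite-order automorphisms; if expanded naively this could grow uncontrollably, and the $n^2 \log n$ bound requires organising substitutions with shared subterms (in the spirit of Plandowski-style compression) so that each conjugate of a variable is represented by a new variable rather than rewritten in place. Verifying that the resulting rational constraints for the cocycle bookkeeping can be composed within $O(n \log n)$ bits of state at each nondeterministic configuration is the delicate step, and it is what forces the $n^2$ factor rather than the $n$ factor enjoyed by the pure free group case.
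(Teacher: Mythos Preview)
First, note that the paper does not itself prove this statement: it is quoted as \cite[Theorem~45]{DEarxiv} and used as a black box in Section~\ref{sec:torsion}, so there is no in-paper argument to compare your proposal against.

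That said, your proposal has a genuine gap. When you write $X_i = Y_i t_{c_i}$ and push the transversal letters to the right, each occurrence of $Y_j$ is replaced by its image under an automorphism $\phi_t\in\mathrm{Aut}(F)$ coming from conjugation by some $t\in T$. You introduce ``fresh variables for conjugates of the $Y_i$'s'', but you never explain how the constraint $Z=\phi_t(Y_i)$ linking a fresh variable to the original one is to be enforced. If $\phi_t$ happened to be inner in $F$ this would be an ordinary equation $Z=fY_if^{-1}$ with a constant $f$; for a general virtually free group the $\phi_t$ are outer, and what you obtain is a system of \emph{twisted} equations in $F$ --- equations in which variables appear under the action of a fixed finite group of automorphisms. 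Theorem~2.1 of \cite{CDE} covers only untwisted equations with rational constraints, and a rational constraint cannot encode $Z=\phi_t(Y)$, since that relation is not recognisable by any automaton reading the two strings independently.

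Handling the twist is precisely the additional content of \cite{DEarxiv} over \cite{CDE}, and it is the reason \cite{DG} reduce to \emph{twisted} word equations rather than ordinary ones. The actual argument extends the recompression machinery so that the compression operations commute with a finite group of monoid morphisms. Your final paragraph worries about the quadratic size blow-up, but that is a secondary issue; the primary obstacle --- that the free-group black box you invoke does not accept twisted input --- is not addressed at all.
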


\begin{remark} In our  applications below we have $\Omega$ representing  some system of equations and inequations, with $|\Omega|=n$, and we construct algorithms where the extended alphabet $C$ has size $|C|\in O(n)$ in the torsion-free case and $|C|\in O(n^2)$ in the torsion case. This means  we can write down the entire alphabet $C$ as binary strings within our space bounds.  Moreover, each element $(c,v)$ of any table we construct has $v$ of (fixed) bounded length, so we can write down entire tables within our space bounds.
 \end{remark}

\subsection{Closure properties}

It is well known \cite[Theorem 2.8]{LsysHandbook}  that ET0L is  a full AFL  (closed under homomorphism, inverse homomorphism, finite union, intersection with regular languages).
Here we show the space complexity of an ET0L language is not affected by these operations.  

 \begin{proposition}\label{prop:closureET0L}
 Let $\Sigma, \Gamma$ be finite alphabets of fixed size, $M$ an NFA of constant size with $L(M) \subseteq \Sigma^*$,  and $\varphi\colon \Gamma^*\to \Sigma^*$, $\psi\colon \Sigma^*\to \Gamma^*$   homomorphisms. 
 If $L_{\Omega_1}, L_{\Omega_2}\subseteq \Sigma^*$ are E(D)T0L in \NSPACE$(f(n))$ (on inputs $\Omega_1, \Omega_2$, respectively, with $|\Omega_1|, |\Omega_2|\in O(n)$)  
 then 
 \begin{itemize}
 \item (homomorphism) $\psi(L_{\Omega_1})$ is E(D)T0L  in \NSPACE$(f(n))$,
 \item (intersection with regular) $L_{\Omega_1}\cap L(M)$ is E(D)T0L  in \NSPACE$(f(n))$,
    \item (union) $L_{\Omega_1}\cup L_{\Omega_2}$ is E(D)T0L  in \NSPACE$(f(n))$,
   \item (inverse homomorphism) $\varphi^{-1}(L_{\Omega_1})$ is ET0L in \NSPACE$(f(n))$.

 \end{itemize}\end{proposition}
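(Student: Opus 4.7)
The plan is to revisit the classical ET0L closure constructions (cf.\ \cite{Asveld} and \cite[Theorem 2.8]{LsysHandbook}) and check, in each of the four cases, that they can be carried out as post-processing on the NFA descriptions already output by the \NSPACE$(f(n))$ algorithms computing $L_{\Omega_1}$ and $L_{\Omega_2}$, adding only data whose size depends on the fixed parameters ($\Sigma$, $\Gamma$, $M$, $\varphi$, $\psi$) and not on $n$. Since those descriptions are written to a write-only output tape, bolting on a constant number of extra states, edges, and table entries costs negligible work-tape space. For \emph{homomorphism} $\psi$, I would append to the NFA a fresh final state $f'$ with an incoming edge from each original final state labelled by the fixed deterministic table $t_\psi$ sending $\sigma\mapsto \psi(\sigma)$ for $\sigma\in\Sigma$ and acting as the identity on $C\setminus\Sigma$; determinism (and hence EDT0L) is preserved. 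For \emph{union}, I would tag the two extended alphabets to be disjoint, run the two algorithms in sequence on the shared work tape (outputting both NFA descriptions), then add a fresh start symbol $c_0'$ and a new initial NFA state with one transition to each original start state, using two deterministic start tables in the EDT0L case or a single nondeterministic one in the ET0L case.

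For \emph{intersection with} $L(M)$, $M=(Q,\Sigma,\delta,q_0,F)$, I would use the standard state-tracking construction: expand the extended alphabet $C$ to $C\times Q\times Q$, and, for each rule $(c, v_1\cdots v_k)\in h$ in an original table, include new rules
\[
((c, p, q),\; (v_1, p, p_1)(v_2, p_1, p_2)\cdots (v_k, p_{k-1}, q))
\]
for every choice of intermediate states $p_1,\dots,p_{k-1}\in Q$, while terminal rules are kept as $((\sigma, p, q), \sigma)$ precisely when $q\in\delta(p,\sigma)$; the start symbol becomes $(c_0, q_0, q_f)$ for $q_f\in F$ (gathered under a fresh branching start). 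Because $|Q|$ is constant, the alphabet blow-up is polynomial in $|C|$ and each table entry still has bounded length, so the construction remains within \NSPACE$(f(n))$. To retain determinism in the EDT0L case I would push the choice of intermediate states into the NFA control, parameterising each new table by a choice function $(c,p,q)\mapsto (p_1,\dots,p_{k-1})$ and indexing NFA edges by such functions; the number of these choice functions is bounded polynomially in $|C|$.

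The main obstacle is \emph{inverse homomorphism}, because $\varphi^{-1}(\sigma)$ may be an infinite regular language --- for example when some $\gamma\in\Gamma$ satisfies $\varphi(\gamma)=\epsilon$ --- so no finite table can list preimages directly. Rather than constructing a grammar by hand, I would reduce to the already-handled cases via the identity
\[
\varphi^{-1}(L_{\Omega_1}) \;=\; \eta\!\left((L_{\Omega_1}\cdot \#\,\Gamma^*)\;\cap\;R_\varphi\right),
\]
where $R_\varphi\subseteq (\Sigma\sqcup\{\#\}\sqcup\Gamma)^*$ is the fixed-size regular language $\{u\# v : u\in\Sigma^*,\; v\in\Gamma^*,\; \varphi(v)=u\}$ and $\eta$ erases $\Sigma\cup\{\#\}$ and fixes $\Gamma$. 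Right-concatenation of $L_{\Omega_1}$ with the fixed regular language $\#\,\Gamma^*$ is handled by appending to every final state of the NFA a constant-size gadget (fresh tables and states) that nondeterministically generates $\#\,\Gamma^*$; the intersection with $R_\varphi$ and the application of $\eta$ then fall under the two previous paragraphs, each preserving \NSPACE$(f(n))$ since all added data depend only on $\varphi$. The erasing homomorphism $\eta$ introduces genuine nondeterminism into the tables, which is precisely why the proposition asserts only ET0L --- not EDT0L --- for the inverse-homomorphism case.
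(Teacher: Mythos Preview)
Your treatment of homomorphism and union is correct and in line with the standard constructions the paper defers to. For intersection with a regular language the state-tracking idea is right, but your count is off: the number of choice functions $(c,p,q)\mapsto(p_1,\dots,p_{k-1})$ is $(|Q|^{k-1})^{|C|\cdot|Q|^2}$, which is exponential in $|C|$, not polynomial. This does not actually break the space bound---a \emph{single} choice function occupies $O(|C|)$ work-tape bits and you may enumerate them one at a time---but you should state it that way. For the EDT0L completeness direction you also tacitly need $M$ deterministic, so that two occurrences of the same annotated symbol $(c,p,q)$ (which derive identical terminal substrings by determinism of the remaining tables) are forced through identical intermediate states; since $|M|$ is constant, determinising first is free.

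The inverse-homomorphism argument has a genuine gap: the language
\[
R_\varphi=\{u\#v:u\in\Sigma^*,\ v\in\Gamma^*,\ \varphi(v)=u\}
\]
is \emph{not} regular in general. Take $\Sigma=\{a\}$, $\Gamma=\{b\}$, $\varphi(b)=a$; then $R_\varphi=\{a^n\#b^n:n\ge 0\}$. So your reduction ``concatenate with $\#\Gamma^*$, intersect with $R_\varphi$, erase'' collapses at the intersection step. The standard constructions the paper cites (Asveld, Culik) do not separate $u$ and $v$ by a marker; they work inside the grammar, using nondeterministic tables to (i) group the emerging $\Sigma$-letters into blocks of length at most $\max_\gamma|\varphi(\gamma)|$, (ii) replace each block by some $\gamma\in\Gamma$ whose $\varphi$-image equals that block, and (iii) sprinkle in letters $\gamma$ with $\varphi(\gamma)=\epsilon$. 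All of this is a constant-size modification depending only on $\varphi$, so the \NSPACE$(f(n))$ bound is preserved, and the intrinsic nondeterminism in steps (ii)--(iii) is precisely why only ET0L is claimed. If you want to rescue a reduction in your style, you must \emph{interleave} rather than concatenate: the set $\{\gamma_1\varphi(\gamma_1)\cdots\gamma_m\varphi(\gamma_m):\gamma_i\in\Gamma\}\subseteq(\Gamma\sqcup\Sigma)^*$ \emph{is} regular, but arranging the grammar to emit such interleavings already requires the kind of in-grammar manipulation above, not mere right-concatenation.
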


The proof is straightforward keeping track of complexity in the standard proofs \cite{AsveldChar, Culik}. Note EDT0L is not closed under inverse homomorphism \cite{EhrenRozenInverseHomomEDT0L}.
 \begin{proposition}[Projection onto a factor]\label{prop:projection}
If $L_{\Omega} \subseteq \Sigma^*$ is E(D)T0L in \NSPACE$(f(n))$ on an input $\Omega$ of size $n$, and for some   fixed integer $s$
all words in $L_{\Omega}$ have the form $u_1\#\dots \#u_s$ with $u_i\in\left(\Sigma\setminus \{\#\}\right)^*$, and $1\leq i\leq j\leq s$, then 
 $$L=\{u_i\#\dots\#u_j\mid u_1\#\dots \#u_i\#\dots\#u_j\#\dots \#u_s\in L_{\Omega}\}$$  is E(D)T0L  in \NSPACE$(f(n))$.  \end{proposition}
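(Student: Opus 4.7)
The plan is to reduce the projection operation to a composition of the closure operations in Proposition~\ref{prop:closureET0L}. The idea is to first enrich the alphabet so each letter carries a tag recording which of the $s$ blocks it lies in, and then apply a single homomorphism that erases tagged letters and separators whose tag lies outside $[i,j]$. Writing $\Sigma_0 = \Sigma \setminus \{\#\}$, set $\Gamma = \Sigma \sqcup \bigsqcup_{k=1}^{s} \Sigma_0^{(k)} \sqcup \{\#_1, \dots, \#_{s-1}\}$ where each $\Sigma_0^{(k)}$ is a disjoint tagged copy of $\Sigma_0$. Let $\phi\colon \Gamma^* \to \Sigma^*$ be the tag-forgetting homomorphism (sending $a^{(k)} \mapsto a$, $\#_k \mapsto \#$, and the identity on $\Sigma$), let $R_{\mathrm{tag}} = (\Sigma_0^{(1)})^*\#_1(\Sigma_0^{(2)})^*\#_2 \cdots \#_{s-1}(\Sigma_0^{(s)})^*$ be the fixed regular language enforcing the block tagging, and define $\widetilde{L}_\Omega = \phi^{-1}(L_\Omega) \cap R_{\mathrm{tag}}$. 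Let $\psi\colon \Gamma^* \to \Sigma^*$ send $a^{(k)} \mapsto a$ if $i \leq k \leq j$ (otherwise $\epsilon$) and $\#_k \mapsto \#$ if $i \leq k \leq j-1$ (otherwise $\epsilon$); a direct inspection gives $L = \psi(\widetilde{L}_\Omega)$.

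In the ET0L case, Proposition~\ref{prop:closureET0L} delivers the conclusion immediately: inverse homomorphism followed by intersection with a fixed regular language gives $\widetilde{L}_\Omega$ as ET0L in $\NSPACE(f(n))$, and the final homomorphism $\psi$ yields $L$ in the same complexity. Since $\Gamma$, $R_{\mathrm{tag}}$, $\phi$ and $\psi$ all have size depending only on $|\Sigma|$ and the fixed constants $s,i,j$, the combined construction only inflates the printed specification by a constant factor.

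The main obstacle is the EDT0L case, since EDT0L is not closed under inverse homomorphism. To address this, I would construct an EDT0L grammar for $\widetilde{L}_\Omega$ directly from the given grammar $(C,\Sigma,H,R,c_0)$ for $L_\Omega$: the extended alphabet becomes $C' = C \cup \{c^{(k)} : c \in C,\ 1 \leq k \leq s\} \cup \{\#_1,\dots,\#_{s-1}\} \cup \{\bot\}$ with start symbol $c_0^{(1)}$, and each $h \in H$ is replaced by $h'$ defined by scanning $h(c) = v_1 \cdots v_\ell$ left to right with a running tag initialised at $k$: a non-separator $v_i$ is emitted as $v_i^{(\mathrm{tag})}$, a separator is emitted as $\#_{\mathrm{tag}}$ and advances the tag, and $\bot$ is emitted (and propagated by $h'(\bot)=\bot$) whenever the tag would exceed $s$. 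A short preliminary normalisation of the given grammar, replacing in-alphabet occurrences of $\#$ on right-hand sides by a fresh internal symbol $\hat\#$ and appending a single final table sending $\hat\# \mapsto \#$ to the rational control, ensures that separator bookkeeping is uniform across intermediate words so the tag propagation is well-defined. Applying the homomorphism $\psi$, which is permitted for EDT0L by Proposition~\ref{prop:closureET0L}, then yields $L$ as EDT0L in $\NSPACE(f(n))$, with only constant-factor blow-up in the alphabet and in each table.
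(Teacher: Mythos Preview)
Your ET0L argument is correct and takes a different route from the paper: you reduce projection to the closure operations of Proposition~\ref{prop:closureET0L} (inverse homomorphism, intersection with a fixed regular language, then homomorphism), whereas the paper works directly on the grammar in both the ET0L and EDT0L cases. This buys you a very short proof for ET0L at the cost of needing a separate argument for EDT0L.

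That separate EDT0L argument has a genuine gap. A single tag $c^{(k)}$ records only the block in which the expansion of $c$ \emph{begins}; it carries no information about where that expansion \emph{ends}. When a table $h$ is applied and $h(c)$ contains separators, your running tag advances correctly inside the image of $c^{(k)}$, but the symbol $d^{(m)}$ sitting immediately to its right was tagged at the previous step, and $m$ is not updated to reflect the separators that have just appeared to its left. Concretely: take $s=3$, $C=\{c_0,a,b,\#\}$, tables $h_1\colon c_0\mapsto ab$ and $h_2\colon a\mapsto a\#a,\ b\mapsto b\#b$ (identity elsewhere), rational control $\{h_1h_2\}$, so that $L_\Omega=\{a\#ab\#b\}$. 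Your construction yields
\[
c_0^{(1)}\ \longrightarrow\ a^{(1)}b^{(1)}\ \longrightarrow\ a^{(1)}\#_1 a^{(2)}\,b^{(1)}\#_1 b^{(2)},
\]
with two copies of $\#_1$ and the two $b$'s carrying tags $1,2$ instead of the correct $2,3$; applying your erasing map $\psi$ for $(i,j)=(2,3)$ then returns $ab$ rather than $ab\#b$. The preliminary normalisation replacing $\#$ by an internal $\hat\#$ does not rescue this: even granting that every table fixes $\hat\#$, expanding a symbol to the left of $d$ can still introduce new separators and shift $d$'s true block position after $d$'s tag has already been assigned.

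The paper's fix is to tag each symbol with an \emph{interval}: one writes $c^{ij}$ for $1\le i\le j\le s$ to record that the full eventual expansion of $c$ spans blocks $i$ through $j$, starting from $c_0^{1,s}$. With both endpoints recorded, adjacent symbols have matching interval endpoints by construction, and the image of $c^{ij}$ under a table can be decorated consistently and deterministically once the separators are labelled. The second coordinate is precisely the information your single tag is missing.
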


\subsection{From ET0L to EDT0L}

In computing the full solution set to equations as shortlex geodesic words, we will need to take inverse homomorphism.
Even though in general the image under an inverse homomorphism of an EDT0L language is just ET0L, because of the special structure of solution sets we can apply the {\em  Copying Lemma} of Ehrenfeucht and Rozenberg \cite{EhrenRozenCopyingEDT0L} to show the following.

\begin{proposition}\label{prop:copyME}
Let $S$ be an alphabet and  $h:S\to S'$ be a homomorphism of from $S$ to a disjoint alphabet $S'=\{s'\mid s\in S\}$ defined by $h(s)=s'$.  Let $\sep$ be a symbol not in $S\cup S'$ and define $h(\sep)=\sep$.
Let $L_1$ be a set of words of the form $w\sep h(w)$ where $w\in S^*$.
 If $L_1$ is ET0L in $\NSPACE(f(n))$, then 
$L_2=\{w\mid w\sep h(w)\in L_1\}$ is EDT0L in $\NSPACE(f(n))$.

\end{proposition}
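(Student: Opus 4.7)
The plan is to combine the projection argument with the Copying Lemma of Ehrenfeucht and Rozenberg \cite{EhrenRozenCopyingEDT0L}. Recall that the Copying Lemma says that if $L$ is an ET0L language over an alphabet $\Sigma$ and $h\colon\Sigma^*\to\Sigma'^*$ is a renaming onto a disjoint alphabet $\Sigma'$, then $\{w\sep h(w)\mid w\in L\}$ is EDT0L, with the EDT0L grammar obtainable from any ET0L grammar for $L$. The intuition is that in a paired word $w\sep h(w)$ the right half is determined by the left, so any nondeterminism in an ET0L derivation of $L_1$ is ``redundant'' and can be absorbed into an enlarged extended alphabet and a deterministic set of tables.

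Given $L_1$ as an ET0L language in $\NSPACE(f(n))$, I would first apply the homomorphism $\psi\colon (S\cup S'\cup\{\sep\})^*\to S^*$ that fixes $S$ and sends each letter of $S'\cup\{\sep\}$ to $\epsilon$; by Proposition~\ref{prop:closureET0L} the language $\psi(L_1)=L_2$ is ET0L in $\NSPACE(f(n))$. I would then apply the Copying Lemma to $L_2$ to conclude that $\{w\sep h(w)\mid w\in L_2\}$ is EDT0L, and observe that by the hypothesis on $L_1$ this language coincides with $L_1$ itself. Thus $L_1$ is EDT0L in $\NSPACE(f(n))$. A final application of Proposition~\ref{prop:closureET0L} to the homomorphism $\psi$ (closure of EDT0L under homomorphism with preservation of space) then yields that $L_2=\psi(L_1)$ is EDT0L in $\NSPACE(f(n))$, as required.

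The crucial technical point, and the main obstacle, is verifying that the Copying Lemma construction preserves the $\NSPACE(f(n))$ complexity bound. Ehrenfeucht and Rozenberg's transformation enlarges the extended alphabet by only a constant factor (essentially two paired copies of the original one, tracking the $w$- and $h(w)$-sides) and replaces each nondeterministic table of the ET0L grammar by a finite family of deterministic tables, together with a corresponding modification of the NFA realising the rational control; all of these steps are local rewritings of the grammar description. I would therefore implement the transformation as a finite-state transducer that reads the printout of the ET0L grammar symbol by symbol and emits the printout of the EDT0L grammar, and compose this transducer with the $\NSPACE(f(n))$ algorithm producing the ET0L grammar for $L_2$. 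Since the transducer uses only logarithmic additional workspace, the composite algorithm still runs in $\NSPACE(f(n))$ and prints an EDT0L grammar for $L_1$, from which one final projection (again within $\NSPACE(f(n))$ by Proposition~\ref{prop:closureET0L}) yields an EDT0L grammar for $L_2$.
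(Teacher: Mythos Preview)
Your formal statement of the Copying Lemma is incorrect, and this is a genuine gap. You assert that if $L$ is ET0L then $\{w\sep h(w)\mid w\in L\}$ is EDT0L. But composing with your erasing homomorphism $\psi$ (EDT0L is closed under homomorphism, Proposition~\ref{prop:closureET0L}) would then give $L=\psi(\{w\sep h(w)\mid w\in L\})$ as EDT0L for \emph{every} ET0L language $L$, collapsing ET0L to EDT0L; these classes are known to be distinct. So no construction of the kind you describe can exist, and your step applying the lemma to $L_2$ fails.

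The correct form of the Ehrenfeucht--Rozenberg lemma, and the one the paper invokes, runs in the other direction: if a language $L_1$ \emph{already consisting only of words $w\sep h(w)$} happens to be ET0L, then $L_1$ is in fact EDT0L. Your own intuition paragraph (``in a paired word $w\sep h(w)$ the right half is determined by the left, so any nondeterminism \dots\ is redundant'') describes exactly this, but it is an argument about the given ET0L grammar for $L_1$, not about a grammar for $L_2$. Your initial detour through $L_2=\psi(L_1)$ discards precisely the paired structure the lemma exploits; once you hold only an ET0L grammar for $L_2$, there is no general way to recover an EDT0L grammar for $L_1$ from it.

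The paper's route is accordingly more direct: work with the ET0L grammar for $L_1$ itself, replace each nondeterministic table by the finite family of its deterministic specialisations (a local rewriting, within the same space bound), observe that this still generates all of $L_1$ because any genuinely nondeterministic application of a table would yield a word whose two halves disagree, and then project onto the prefix via Proposition~\ref{prop:projection} to obtain $L_2$ as EDT0L in $\NSPACE(f(n))$.
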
\begin{proof}
By \cite{EhrenRozenCopyingEDT0L}, any nondeterministic table in the grammar for $L_1$ can be replaced by a finite number of deterministic tables (essentially, if nondeterminism allowed some letter $c\in C$ to produce two different results, then  some word in $L_1$ would not have the form $w\sep h(w)$). So without loss of generality we can replace a table $f$  containing $(c,v_1),\dots, (c,v_k)$ by $k$ tables $f_i$ containing $(c,v_i)$ only). 
This modification is clearly in the same space bound. Project onto the prefix using Proposition~\ref{prop:projection}.
\end{proof}

 \section{Hyperbolic groups}\label{sec:hyp-intro}

\subsection{Definitions}
Recall the {\em Cayley graph} for a group $G$ with respect to a finite symmetric generating set $S$ is a directed graph $\Gamma(G,S)$ with vertices labeled by $g\in G$ and a directed edge $(g,h)$ labeled by $s\in S$ whenever $h=_Ggs$.
Let $\ell(p)$, $i(p)$ and $f(p)$ resp. be the length, initial and terminal vertices of a path $p$ in the Cayley graph. 
A path  $p$ is {\it geodesic} if 
$\ell(p)$ is minimal among the lengths of all paths $q$ with the same endpoints. 
If $x,y$ are two points in  $\Gamma(G,S)$, we define $d(x,y)$ to be the length of a shortest path from $x$ to $y$ in  $\Gamma(G,S)$.

 \begin{definition}[$\delta$-hyperbolic group (Gromov)]
 Let $G$ be a group with finite symmetric generating set $S$, and let $\delta\geq 0$ be a fixed real number. 
 If  $p,q,r$ are geodesic paths in $\Gamma(G,S)$ with $f(p)=i(q),f(q)=i(r),f(r)=i(p)$, we call $[p,q,r]$ a 
{\em geodesic triangle}. 
A geodesic triangle is {\em $\delta$-slim} if $p$ is contained in a $\delta$-neighbourhood of $q\cup r$, that is, every point on one side of the triangle is within $\delta$ of some point on one of the other sides. (See for example Figure~\ref{subfig:geod}.)
We say $(G,S)$ is {\em $\delta$-hyperbolic} if every geodesic triangle in $\Gamma(G,S)$ is $\delta$-slim. We say $(G,S)$ is {\em hyperbolic} if it is $\delta$-hyperbolic for some $\delta\geq 0$. 
 \end{definition}
 
It is a straightforward to show that being hyperbolic is independent of choice of finite generating set.  Thus we say $G$ is hyperbolic if $(G,S)$ is for some finite generating set $S$.

 \begin{lemma}[Dehn presentation]\label{lem:Dehn}
 $G$ is hyperbolic if and only if 
 there is a finite list of pairs of words $(u_i,v_i)\in S^*\times S^*$ with $|u_i|>|v_i|$ and $u_i=_G v_i$ such that the following holds: if $w\in S^*$ is equal to the identity of $G$ then it contains some $u_i$ as a factor.
 
 \end{lemma}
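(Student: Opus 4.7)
The plan is to prove both directions of the equivalence, an argument essentially due to Cannon (and also classical: every $\delta$-hyperbolic group admits a \emph{Dehn algorithm}).

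For the forward direction, assume $(G,S)$ is $\delta$-hyperbolic. The key geometric step I would establish is: there is a constant $K=K(\delta)$ such that every nonempty word $w\in S^*$ with $w=_G 1$ contains a subword $u$ of length $\leq K$ which is equal in $G$ to a strictly shorter word. Granted this, one defines the Dehn list by enumerating all $u\in S^*$ with $|u|\leq K$ that are non-geodesic in $G$ and pairing each with any strictly shorter $v\in S^*$ satisfying $u=_G v$; this is a finite list satisfying the required property. To prove the geometric step, suppose for contradiction that every subword of $w$ of length at most $K$ is geodesic. Then the loop in the Cayley graph traced by $w$ is a $(1,K)$-local geodesic. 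The standard stability lemma for $\delta$-hyperbolic spaces says that if $K$ is chosen large enough relative to $\delta$, every such local geodesic is automatically a $(\lambda,\mu)$-quasigeodesic with constants depending only on $\delta$, and hence stays within bounded Hausdorff distance of the geodesic between its endpoints. Applying the quasigeodesic inequality $d(i(w),f(w))\geq \lambda^{-1}|w|-\mu$ to a loop forces $|w|\leq \lambda\mu$, so all sufficiently long trivial words yield a contradiction; the finitely many short cases can be included in the Dehn list directly.

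For the reverse direction, suppose a Dehn list exists and set $K=\max_i|u_i|$. First, no geodesic word can contain any $u_i$ as a factor, since substituting $v_i$ for $u_i$ would produce a strictly shorter word representing the same element. Next, given $w\in S^*$ with $w=_G 1$ and $|w|=n$, the hypothesis guarantees a factor $u_i$ in $w$; replacing it with $v_i$ gives a strictly shorter word still equal to $1$, and iterating reduces $w$ to the empty word in at most $n$ Dehn substitutions. Each such substitution is the application of a single relator $u_iv_i^{-1}$, so $G$ admits a finite presentation $\langle S\mid u_iv_i^{-1}\rangle$ with linear Dehn function. By Gromov's classical characterisation, a group with a linear isoperimetric inequality is hyperbolic, completing the reverse direction.

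The main obstacle is the forward direction, specifically the promotion of long enough local geodesics to global quasigeodesics in $\delta$-hyperbolic Cayley graphs, which is the standard but non-trivial geometric input from the theory of hyperbolic spaces. The reverse direction, once the linear isoperimetric inequality is observed, reduces immediately to Gromov's theorem.
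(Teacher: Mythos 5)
The paper does not prove this lemma; it is stated as a known result with the proof delegated to the cited works of Cannon and Lys\"{e}nok. Your argument is the standard one from the literature and is essentially correct: the forward direction via the stability of $K$-local geodesics (which, together with the quasigeodesic inequality applied to a loop, bounds the length of any trivial word that is a $K$-local geodesic, with the bounded leftovers absorbed directly into the Dehn list), and the reverse direction via the linear isoperimetric inequality and Gromov's characterisation of hyperbolicity. The only thing worth flagging is that both directions rest on substantial black-box inputs --- the local-to-global lemma for $k$-local geodesics in $\delta$-hyperbolic spaces, and the equivalence of hyperbolicity with a linear Dehn function --- which you correctly identify but of course do not (and need not) reprove.
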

 This gives  an algorithm to decide whether or not a word $w\in S^*$ is equal to the identity: while $\ell(w)>0$, look for some $u_i$ factor.
 If there is none, then $w\neq_G 1$. Else replace $u_i$ by $v_i$ (which is shorter). This procedure is called {\em Dehn's algorithm}.
 
  \begin{lemma}
 Dehn's algorithm runs in (linear time and) linear space.\end{lemma}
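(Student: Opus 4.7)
The plan is to implement Dehn's algorithm with a stack, so that the work spent locating each $u_i$-factor is amortized against the guaranteed length reduction produced when the replacement $u_i \to v_i$ is performed. The naive implementation that rescans the whole current word after every replacement is easily seen to run in $O(n)$ space but gives only a quadratic time bound, so the main content is in getting the time down to linear.

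Concretely, I would process the input word $w = a_1 \cdots a_n$ from left to right, maintaining a stack $T$ (initially empty). At each step, push the next input letter $a_j$ onto $T$, then repeatedly check whether the top of $T$ ends in some $u_i$ from the (finite, fixed) Dehn list; if so, pop the top $|u_i|$ letters and push $v_i$, then check again. When the input is exhausted and no more replacements are available at the top, output ``$w =_G 1$'' iff $T$ is empty. Let $L = \max_i |u_i|$, which is a constant depending only on $(G,S)$; then the ``does the top of $T$ end in some $u_i$?'' test inspects only the top $L$ letters and consults the fixed table, so it costs $O(1)$ per query. Each push of a letter also costs $O(1)$.

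For the time bound I would use a potential argument. Define
\[
\Phi = (\text{length of }T) + (\text{number of unread input letters}) + (\text{letters of }v_i\text{ not yet pushed}).
\]
Then $\Phi \le n$ initially, and every elementary step (reading an input letter and pushing it, pushing a pending letter of some $v_i$, or performing a replacement $u_i \to v_i$) either leaves $\Phi$ unchanged or strictly decreases it: a replacement pops $|u_i|$ letters from $T$ and schedules $|v_i|$ pending pushes, giving a net change of $|v_i| - |u_i| \le -1$. Hence there are at most $n$ replacements, and between consecutive replacements the algorithm performs only $O(1)$ amortized work per input letter read. Summing gives total running time $O(n)$. For space, the stack and the pending buffer together have total length at most $\Phi \le n$, so the whole computation uses $O(n)$ cells beyond the fixed table of rules.

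The main subtle point, and the one I would be careful to spell out, is that a single replacement can create a brand-new $u_j$-factor straddling the newly pushed $v_i$ and the letters previously beneath it, potentially triggering a cascade of further replacements before the next input letter is read; one must check that this cascade is paid for by the potential drop rather than being rescanned from the start each time. The stack representation makes this local check trivial (only the top $L$ letters matter), and the potential $\Phi$ absorbs the cost of the cascade, since each replacement in the cascade still decreases $\Phi$ by at least one.
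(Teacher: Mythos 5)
The paper states this lemma without proof, treating it as a standard fact about hyperbolic groups, so there is no argument in the paper to compare against; your stack-based implementation with a potential-function analysis is the standard way to prove the linear time bound, and it is essentially correct. The space bound is also correct and is in fact the only part the paper later uses (Dehn's algorithm appears in Step 4 of the overview as a subroutine on $O(n)$-size inputs, where only the $O(n)$ space matters for the \NSPACE\ accounting, so even the naive quadratic-time, linear-space implementation would suffice for the paper's purposes).

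One point you should sharpen. You frame the only subtlety as one of amortized cost, but there is also a correctness subtlety that is glossed over: your acceptance condition is ``final stack empty,'' and you detect $u_i$-factors only in the top $L$ letters of the stack, whereas Dehn's algorithm as stated in Lemma~\ref{lem:Dehn} rejects precisely when the current word contains no $u_i$-factor \emph{anywhere}. For the stack version to be a faithful implementation you need the invariant that the stack is $u_i$-factor-free between elementary steps; this is what lets you conclude at termination that a nonempty stack represents a nontrivial element. That invariant is preserved only if you push $v_i$ one letter at a time, re-checking the top after each single push. Your potential argument implicitly assumes this (you count ``pushing a pending letter of some $v_i$'' as a separate elementary step), but your algorithm description says ``push $v_i$, then check again,'' which, read literally, can miss a new factor created in the interior of $v_i$ that does not reach the new top. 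State the invariant and the letter-by-letter pushing explicitly, and both correctness and your $O(n)$ bounds follow cleanly.
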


\begin{definition}[Quasigeodesic]\label{defn:qg}
For $\lambda \geq 1, \mu\geq 0$ real numbers, a path $p$ in $\Gamma(G,S)$ is a $(\lambda, \mu)$-{\em quasigeodesic} 
if for any subpath $q$ of $p$ we have 
$\ell(q)\leq \lambda d(i(q),f(q))+\mu$. 
\end{definition}

  Throughout this article, we assume $G$ is a fixed hyperbolic group which we treat as a constant for complexity purposes. We assume we are given $(G,S)$, the constant $\delta$, the finite list of pairs for Dehn's algorithm, and any other constants depending only on the group, for example the constants $\lambda,\mu$ in 
  Prop.~\ref{canrep_qg}  below.

\subsection{Languages in hyperbolic groups}

\begin{proposition}\label{prop:reg-hyp}

Let $G$ be a fixed hyperbolic group with finite  generating set $S$, $\lambda\geq 1, \mu\geq 0$ constants with $\lambda\in\mathbb Q$ and $\mu$ sufficiently large. 
Then the following sets are regular languages.
\begin{enumerate}
\item
The set of all geodesics over $S$.
\item
The set of all shortlex geodesics over $S$. 
\item The set of all $(\lambda,\mu)$-quasigeodesics, $Q_{S,\lambda, \mu}\subseteq S^*$. 
\end{enumerate}
Furthermore, the set of all pairs of words $(u,v)\in Q_{S,\lambda, \mu}^2$ such that $u=_Gv$ is accepted by an asynchronous 2-tape automaton.
\end{proposition}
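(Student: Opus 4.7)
The plan is to deduce all four assertions from two standard pillars of hyperbolic geometry: Cannon's theorem on the finiteness of \emph{cone types}, and the Morse (stability) lemma for quasigeodesics together with its local-to-global companion. None of these is original here --- they appear in various forms in \cite{WordProc} and in the original work of Cannon and Gromov --- so the sketch concentrates on how to assemble them rather than on re-proving the geometric inputs.

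For (1) I would take the states of a DFA to be the cone types of vertices of $\Gamma(G,S)$, where the cone type of $g$ records the set of $w \in S^*$ for which the concatenation of a geodesic from $1$ to $g$ with $w$ is still geodesic. Cannon showed there are only finitely many cone types in a hyperbolic group, and the transition $\mathrm{cone}(g) \xrightarrow{s} \mathrm{cone}(gs)$ is well defined, so the resulting finite automaton accepts exactly the set of geodesic words. For (2), a geodesic $v$ is shortlex iff no strictly lex-smaller geodesic $u$ of the same length has $u =_G v$. By $\delta$-slimness, two geodesics with the same endpoints synchronously $2\delta$-fellow-travel, so at each position the word difference $u_i^{-1} v_i$ ranges over a finite set; a product automaton over this difference alphabet recognises ``has a strictly smaller equal competitor'', and its complement intersected with the geodesic language of (1) is again regular.

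For (3) I would invoke the local-to-global principle for quasigeodesics in hyperbolic spaces: provided $\mu$ exceeds a threshold depending on $\delta$ and $\lambda$, there is a constant $L = L(\delta, \lambda, \mu)$ such that a path is a global $(\lambda, \mu)$-quasigeodesic iff every subpath of length at most $L$ is one. This turns ``being in $Q_{S,\lambda,\mu}$'' into a sliding-window condition on a window of fixed size, which is trivially regular.

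Finally, for the asynchronous $2$-tape automaton, I would combine the Morse lemma with a standard matching argument. Morse furnishes a constant $R = R(\delta, \lambda, \mu)$ such that every $(\lambda, \mu)$-quasigeodesic lies in the $R$-neighbourhood of any geodesic between its endpoints; hence whenever $u =_G v$ are both $(\lambda, \mu)$-quasigeodesics one can asynchronously pair positions so that each running difference $u_i^{-1} v_j$ stays in the $R$-ball of $G$. The states of the two-tape automaton are the finitely many elements of this ball; it advances whichever tape keeps the difference small, and accepts precisely on the identity state. The main obstacle is pinning down the quantitative thresholds --- the bound on $\mu$ for the local-to-global principle, and the existence of an asynchronous matching realisable by a finite control --- but these are well-documented inputs from hyperbolic geometry and do not affect the regularity conclusions.
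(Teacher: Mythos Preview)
Your sketch is sound and captures the standard arguments; in fact the paper itself does not prove this proposition at all, simply writing ``See \cite{WordProc,HoltRees}'' and treating the result as background. What you have outlined --- cone types for (1), the synchronous fellow-traveller/word-difference machine for (2), the local-to-global principle for quasigeodesics for (3), and the Morse lemma plus bounded asynchronous word differences for the two-tape claim --- is precisely the content of those references, so there is nothing to compare beyond noting that you have supplied what the paper delegates.
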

See  \cite{WordProc,HoltRees}.

\subsection{Main reduction result}
Here is our  key technical result. 

\begin{proposition}[Covering  to full solution sets]\label{prop:shortlex-hyp}
Let $G$ be a hyperbolic group with finite symmetric  generating set $S$. Let $h:S\to S'$ and $Q_{S,\lambda,\mu}$ be as defined above, $\#,\sep$  symbols not in $S\cup S'$, $h(\#)=\#,h(\sep)=\sep$, 
and $\mathcal T\subseteq Q_{S,\lambda, \mu}$ 
a regular set of quasigeodesic words in bijection with $G$.
Suppose $L_1\subseteq (S\cup S'\cup \{\#,\sep\})^*$ consists of words of the form \[u_1\#\dots\#u_r\sep h(v_1)\#\dots \#h(v_r), \ u_i,v_i\in Q_{S,\lambda, \mu}, u_i=_Gv_i, 1\leq i\leq r.\] If $L_1$ 
 is ET0L in $\NSPACE(f(n))$,  then
 \begin{enumerate}\item
  $L_{Q}=\{w_1\#\dots \#w_r\sep h(z_1)\#\dots \#h(z_r)\mid \exists u_1\#\dots\#u_r\sep h(v_1)\#\dots \#h(v_r) \in L_1, w_i=_Gz_i=_Gu_i, w_i,z_i\in Q_{S,\lambda,\mu} \}$    is ET0L  in $\NSPACE(f(n))$.
  
  \item 
  $L_{\mathcal T}=\{w_1\#\dots \#w_r\mid \exists u_1\#\dots\#u_r\sep h(v_1)\#\dots \#h(v_r) \in L_1, w_i=_Gu_i, w_i\in \mathcal T\}$
  is EDT0L  in $\NSPACE(f(n))$.\end{enumerate}
\end{proposition}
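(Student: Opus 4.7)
The plan is to realise $L_Q$ (for part~(1)) and a closely related intermediate form of $L_{\mathcal T}$ (for part~(2)) as images of $L_1$ under \emph{constant-size} rational transductions, then invoke the closure properties of E(D)T0L in $\NSPACE(f(n))$ given by Proposition~\ref{prop:closureET0L}, and finally, for~(2), convert ET0L to EDT0L using the Copying Lemma (Proposition~\ref{prop:copyME}).

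For~(1), Proposition~\ref{prop:reg-hyp} gives that $R_{=} := \{(u,v) \in Q_{S,\lambda,\mu}^2 : u =_G v\}$ is a rational relation, recognised by an asynchronous two-tape automaton $T_{=}$ whose size depends only on the fixed data $G, S, \lambda, \mu$. I would then build a constant-size finite-state transducer $T_\tau$ that reads an input $u_1 \# \dots \# u_r \sep h(v_1) \# \dots \# h(v_r)$, copies each delimiter $\#$ or $\sep$ unchanged, and on each slot of $S$-letters (resp.\ $S'$-letters) invokes $T_{=}$ (resp.\ its $h$-conjugate) to nondeterministically output an arbitrary $w_i \in Q_{S,\lambda,\mu}$ with $w_i =_G u_i$ (resp.\ $h(z_i)$ with $z_i \in Q_{S,\lambda,\mu}$ and $z_i =_G v_i$). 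Critically, $T_\tau$ only reacts to delimiters and so does not have to count $r$. The rational transduction realised by $T_\tau$ then decomposes (via Nivat) as an inverse homomorphism, followed by intersection with a constant-size regular language, followed by a homomorphism; each step preserves ET0L in $\NSPACE(f(n))$ by Proposition~\ref{prop:closureET0L}. Thus $L_Q = T_\tau(L_1)$ is ET0L in $\NSPACE(f(n))$.

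For~(2), I would first intersect $L_Q$ with the constant-size regular language that forces every $w$-slot and every $z$-slot to lie in $\mathcal T$; this yields an ET0L language $L' \subseteq L_Q$ consisting of words $w_1 \# \dots \# w_r \sep h(z_1) \# \dots \# h(z_r)$ with $w_i, z_i \in \mathcal T$ and $w_i =_G z_i$. Since $\mathcal T$ is in bijection with $G$, the equality $w_i =_G z_i$ forces $w_i = z_i$ as words, so $L'$ consists of words of the form $w \sep \bar h(w)$, where $w := w_1 \# \dots \# w_r$ and $\bar h$ extends $h$ by $\bar h(\#) := \#$. To match the exact hypothesis of Proposition~\ref{prop:copyME} (which requires disjoint source and target alphabets), I would apply a constant-size rational transduction renaming each $\#$ occurring after the $\sep$ to a fresh symbol $\#'$, producing a relabelled language $L''$ whose source alphabet $S \cup \{\#\}$ and target alphabet $S' \cup \{\#'\}$ are disjoint apart from the shared $\sep$. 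Proposition~\ref{prop:copyME} then yields $L_{\mathcal T} = \{w : w \sep \bar h(w) \in L''\}$ as EDT0L in $\NSPACE(f(n))$.

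The hard part will be verifying that every transducer and regular language used in the construction is genuinely constant-size, i.e.\ its description is independent of $n$. This is the case because $G, S, \lambda, \mu$, the regular set $\mathcal T$, and the two-tape automaton for $R_{=}$ are all part of the fixed group data, so each has a constant-size description; $T_\tau$ is then assembled from a bounded number of such pieces, glued together using the delimiters $\#$ and $\sep$. With constancy in hand, the three-step rational-transduction decomposition feeds cleanly through Proposition~\ref{prop:closureET0L} without inflating the $\NSPACE(f(n))$ budget, and the final Copying Lemma step is an in-place modification of tables that also respects the space bound.
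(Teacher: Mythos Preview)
Your proposal is correct and follows essentially the same approach as the paper: the paper explicitly constructs the padded pair-alphabet, takes $\psi^{-1}(L_1)$, intersects with the regular language built from the asynchronous equality automaton, and projects by a homomorphism --- exactly the Nivat decomposition of the rational transduction you describe --- then for part~(2) intersects with $\mathcal T$, observes the resulting words have the form $w\sep h(w)$, and applies the Copying Lemma. Your extra care in relabelling the post-$\sep$ copies of $\#$ to ensure alphabet disjointness before invoking Proposition~\ref{prop:copyME} is a point the paper handles more loosely.
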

  The proof involves a series of operations as in 
  Proposition~\ref{prop:closureET0L}--\ref{prop:copyME}. 
  Note that the set of all shortlex geodesics is a suitable choice for $\mathcal T$ in the proposition.

\bigskip

\section{Reduction from torsion-free hyperbolic to free groups}\label{sec:torsionfree}

Section \ref{sec:overview} contains an overview of the general algorithm for solving equations in torsion-free hyperbolic groups. Here we provide further details, and give a proof of the soundness and completeness of our algorithm. The algorithm relies on the existence and special properties of canonical representatives, 
whose construction is very technical (see Appendix \ref{appendix:torsionfree}). 
Their existence guarantees that the solutions of any equation in a torsion-free hyperbolic group can be found by solving an associated system in the free group on $S$, while the fact that they are quasigeodesics (see \ref{canrep_qg})
 allows us to apply the results of the previous sections to obtain the EDT0L characterisation of solutions in shortlex normal form.

\begin{proposition}\label{prop:MAIN-torsionfree}
Let $G$ be a torsion-free hyperbolic group, with  finite symmetric generating set $S$.   Let $ \Phi$ be a system of equations and inequations of input size $n$ as in Section~\ref{sec:notationSolns}. Let $h:S\to S',\#,\sep$  be as in Proposition~\ref{prop:shortlex-hyp}.
Then there exist $\lambda\geq 1, \mu\geq 0 $ and 
\[L=\{w_1\#\dots\# w_m\sep h(w_1) \#\dots \# h(w_m) \mid w_i \in Q_{S,\lambda,\mu}, 1\leq i \leq m\} \]
 such that $\{w\mid  w \sep  h(w)\in L\}$ is a covering solution for $\Phi$, and $L$ is  EDT0L  in $\NSPACE(n^2\log n)$.
\end{proposition}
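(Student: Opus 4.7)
The plan is to follow the algorithm sketched in Section~\ref{sec:overview}, producing a free-group system whose solutions are exactly the canonical representatives of solutions to $\Phi$, invoking the free-group EDT0L result of \cite{CDE}, and arranging the output in the doubled form $w_1\#\cdots\#w_m\sep h(w_1)\#\cdots\# h(w_m)$ required by Proposition~\ref{prop:shortlex-hyp}. First I would preprocess $\Phi$: each inequation $\varphi_j\neq 1$ is replaced by an equation $\varphi_j=X'_j$ together with a constraint that the new variable $X'_j$ is nontrivial (handled by the rational constraint machinery of \cite{CDE}), and then the system is triangulated into $q\in O(n)$ equations of the form $X_jY_j=Z_j$; this keeps the total size in $O(n)$.

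Next, invoking Rips and Sela's canonical representative theorem (Appendix~\ref{appendix:torsionfree}), there is a constant $K$ depending only on $G$ such that every solution of $\Phi$ admits canonical representatives that fit into triangles as in Figure~\ref{subfig:can}, whose inner sides $c_{j1},c_{j2},c_{j3}$ have $S$-length at most $Kn$, and the outer arms decompose as prefixes/suffixes $P_j,Q_j,R_j$ shared between the three sides. The algorithm nondeterministically guesses the tuple $\mathbf c=(c_{j1},c_{j2},c_{j3})_{j=1}^q$ (total size $O(n^2)$, which is writable within $\NSPACE(n^2\log n)$), and for each $j$ verifies $c_{j1}c_{j2}c_{j3}=_G 1$ using Dehn's algorithm (Lemma~\ref{lem:Dehn}), which requires only linear space. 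When the check succeeds, the algorithm writes down the associated free-group system $\Phi_{\mathbf c}$ of $3q$ equations
\[
X_j=P_jc_{j1}Q_j,\quad Y_j=Q_j^{-1}c_{j2}R_j,\quad Z_j=P_jc_{j3}R_j,
\]
whose total size is $O(n^2)$. By the defining property of canonical representatives, $\bigcup_{\mathbf c}\mathrm{Sol}_{F(S)}(\Phi_{\mathbf c})$ projects onto $\mathrm{Sol}_G(\Phi)$, so the union over $\mathbf c$ of the free-group solution sets is a covering solution.

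Then I would feed each $\Phi_{\mathbf c}$ to the free-group EDT0L algorithm of \cite[Theorem 2.1]{CDE}. Since $|\Phi_{\mathbf c}|\in O(n^2)$, this runs in $\NSPACE(n^2\log n)$ and emits the nodes and edges of a trim NFA whose accepted language is the rational control of an EDT0L grammar generating all reduced-word solutions of $\Phi_{\mathbf c}$. We augment each emitted node label by $\mathbf c$ (of size $O(n^2)$, so within the space bound and without affecting future complexity), and at the end emit a single new initial node with $\varepsilon$-edges to every initial node produced across all $\mathbf c$. Enumerating $\mathbf c$ in lex order and reusing work-tape space across different $\mathbf c$ keeps the whole computation in $\NSPACE(n^2\log n)$. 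The fact that canonical representatives are $(\lambda,\mu)$-quasigeodesics for constants $\lambda,\mu$ depending only on $G$ (Proposition~\ref{canrep_qg}) ensures that every word appearing in each variable component belongs to $Q_{S,\lambda,\mu}$.

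Finally, to produce the doubled form $w_1\#\cdots\#w_m\sep h(w_1)\#\cdots\# h(w_m)$, I would extend the alphabet of the resulting EDT0L grammar by a disjoint primed copy $S'$ and modify the tables so that every rule $(c,v)$ produced by the CDE construction is replaced by $(c, v\,\widehat v)$, where $\widehat v$ is the image of $v$ under the letter-by-letter primed renaming, with the separator $\sep$ inserted once between the two halves at the end (this can be arranged by threading a dedicated nonterminal whose unique final rule writes out the $\sep$-separated primed copy). Since the CDE output is already EDT0L (deterministic tables), this duplication preserves determinism and stays within the same $\NSPACE(n^2\log n)$ bound. The main obstacle I anticipate is the bookkeeping in this last step: one must ensure the duplication commutes with the rewriting so that what is written on the right of $\sep$ is exactly the $h$-image of what is written on the left, for every derivation. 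This is handled cleanly because determinism of the tables means each nonterminal has a unique expansion trace, so emitting the primed copy in parallel produces precisely $h(w_i)$ for each $w_i$; after this, $L$ has the required form, the $w_i$ lie in $Q_{S,\lambda,\mu}$, and $\{w\mid w\sep h(w)\in L\}$ is a covering solution for $\Phi$.
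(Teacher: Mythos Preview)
Your overall strategy matches the paper's, but there is a substantive gap. You assert that ``the fact that canonical representatives are $(\lambda,\mu)$-quasigeodesics \ldots\ ensures that every word appearing in each variable component belongs to $Q_{S,\lambda,\mu}$.'' This does not follow. The CDE algorithm returns \emph{all} freely reduced solutions of $\Phi_{\mathbf c}$ in $F(S)$, and a freely reduced word over $S$ need not be a $(\lambda,\mu)$-quasigeodesic in the hyperbolic group $G$ unless $G$ is itself free. Canonical representatives guarantee \emph{completeness}---every $G$-solution is hit by some free-group solution---but they say nothing about the extra free-group solutions, which will typically include words that are far from quasigeodesic in $G$. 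Consequently your $L$ will not have the form stated in the proposition. The paper closes this gap by passing to CDE the additional rational constraint that each variable lie in $Q_{S,\lambda,\mu}$; since $Q_{S,\lambda,\mu}$ is regular (Proposition~\ref{prop:reg-hyp}) and of constant size, this costs nothing.

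This omission also breaks your handling of inequations. The condition $x_D\neq_G 1$ is \emph{not} a rational constraint on $S^*$: the set $\{w\in S^*: w=_G 1\}$ is regular only when $G$ is finite, so ``the rational constraint machinery of \cite{CDE}'' cannot absorb it directly. The paper's trick is that once $x_D$ has already been forced into $Q_{S,\lambda,\mu}$, any quasigeodesic representing $1_G$ has length at most $\mu$, so one builds a constant-size DFA $\mathcal D$ accepting those finitely many words and imposes the rational constraint $x_D\notin L(\mathcal D)$. Without the quasigeodesic restriction this step is unavailable, and soundness for the inequations fails.

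A smaller point: your doubling construction, as literally described, does not produce $w\sep h(w)$. Replacing $(c,v)$ by $(c,v\,\widehat v)$ interleaves primed and unprimed material at every rewriting step rather than keeping two parallel halves. The correct device (which is what the paper means by ``inserting the factor $\sep h(W)$ in the extended equations'') is to adjoin a primed copy $c'$ of every symbol in the extended alphabet, add the rule $(c',h(v))$ to the same table for every existing rule $(c,v)$, and take the start word to be $c_0\sep c_0'$. Then each deterministic table acts simultaneously on both halves and one obtains exactly $w\sep h(w)$. Your remark about ``emitting the primed copy in parallel'' gestures at this, but the mechanism you wrote down does not realise it.
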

Applying Proposition~\ref{prop:shortlex-hyp} immediately gives  Theorem~\ref{thmTorsionFree}.
\begin{proof}
We produce a language $L$ of quasigeodesic words over $S$ such that the projection of any tuple in $L$ is in the group element solution set $\text{Sol}_G(\Phi)$ (soundness). We then prove 
(using \cite[Corollary 4.4]{RS95})  that any solution in $\text{Sol}_G(\Phi)$ is the projection of some tuple in $L$ (completeness).  Our proof  follows the outline presented in Section~\ref{sec:overview}.

\paragraph*{1. Preprocessing} \begin{itemize}\item
(Remove inequations) We first transform $\Phi$ into a system consisting entirely of equations by adding a variable $x_D$ to $\mathcal{X}$ and replacing any inequation $\varphi_j(\mathcal X, \mathcal{A})\neq1$ by $\varphi_j(\mathcal X, \mathcal{A}) = x_D$, with the constraint $x_D\neq_G 1$.

\item (Triangulation) 
We transform each equation into several equations of length $3$, by introducing new variables. This can always be done (see the discussion in \cite[Section 4]{CDE}), and it produces approximately $\sum_{i=1}^s l_i\in O(n)$ triangular equations with set of variables $\mathcal{Z}$
 where $m \leq |\mathcal Z|\in O(n)$ and $\mathcal X \subset \mathcal{Z}$. 
From now on assume that the system $\Phi$ consists of $q\in O(n)$ equations of the form
 $X_jY_j=Z_j$ where $1\leq j \leq q$. 
\end{itemize}
\paragraph*{2. Lifting $\Phi$ to the free group on $S$}

In \cite[Theorem 4.2]{RS95} Rips and Sela define a constant, which they call $`bp'$, that roughly bounds the circumference of the `centres' of the triangles whose edges are canonical representatives.
We denote here $bp$ by $\rho$, and note that $\rho\in O(q)= O(n)$ depends on $\delta$ and linearly on $q$. As described in Section~\ref{sec:overview} we run in lex order through all possible tuples of words $\mathbf c=(c_{11},c_{12},c_{13},\dots, c_{q1},c_{q2},c_{q3})$ with $c_{ji}\in S^*,\ell(c_{ji})\leq \rho\in O(n)$.
 For each tuple $\mathbf c$ we use Dehn's algorithm to check $c_{j1}c_{j2}c_{j3}=_G1$, and if this holds for all $1\leq j\leq q$ we then construct a system $\Phi_{\mathbf c}$ of equations of the form 
\begin{equation}\label{csystem}
X_j=P_jc_{j1}Q_j, \ Y_j=Q_j^{-1}c_{j2}R_j, \ Z_j=P_jc_{j3}R_j, \ \ 1\leq j\leq q,
\end{equation}
which has size $O(n^2)$.  In order to avoid an exponential size complexity we write down each system $\Phi_{\mathbf c}$ one at a time, so the space required for this step is $O(n^2)$.  Let $\mathcal{Y}\supset\mathcal{Z}\supset \mathcal X$ be the new set of variables.

\paragraph*{3. Some observations} 
We pause to make the following observations. Any solution to  $\Phi_{\mathbf c}$ 
in the free group $F(S)$ is guaranteed to be a solution to $\Phi$ in the original hyperbolic group $G$. Thus if $S_1\subseteq F(S)^m$ is  a group element solution to $\Phi_{\mathbf c}$ then $\pi(S_1)$  is a group element solution to $\Phi$ in $G$. This will show soundness below.

Secondly, if $(g_1,\dots, g_m)\in  G^m$ is a solution to $\Phi$ in the original hyperbolic group,  \cite[Theorem 4.2 and Corollary 4.4]{RS95} (see Theorem 31 
in Appendix~\ref{appendix:torsionfree}) 
guarantees that there exist canonical representatives $w_i\in Q_{S,\lambda,\mu}$ with $w_i=_Gg_i$ for $1\leq i\leq m$, which have reduced forms $u_i=_Gw_i$ for $1\leq i\leq m$, and our construction is guaranteed to capture any such collection of words. This will show completeness below.

Thirdly, note that the constraint that a word $w\in S^*$ must be a $(\lambda,\mu)$-quasigeodesic and satisfy $w=_G1$ implies that $\ell(w)\leq \mu$. Therefore we can construct a DFA $\mathcal D$ which accepts all words in $S^*$ equal to $1$ in the hyperbolic group $G$ of length at most $\mu$  in constant space (using for example Dehn's algorithm). 
In our next step, we will use this rational constraint to handle
the variable $x_D$ added in the first step above (to remove inequalities). %

Now let us complete the construction by finding the covering solution required.

\paragraph*{4. Covering solution set} 
We now run  the algorithm from \cite{CDE} (which we will refer to as the CDE algorithm) which takes input $\Phi_\mathbf{c}$, which has size in $O(n^2)$, plus the  rational constraint $x_D\not\in  L(\mathcal D)$, plus for each   $y\in \mathcal Y$ the rational constraint that the solution for  $y$  is a word in $Q_{S,\lambda, \mu}$. Since these constraints have constant size (depending only on the group $G$, not the system $\Phi$),  they do not contribute to the $O(n^2)$ size of the input to the CDE algorithm. 

We make two modifications to the details of the CDE algorithm. First, every node printed by the algorithm should include the additional label $\mathbf c$. (This ensures the NFA we print for each system $\Phi_{\mathbf c}$ is distinct.)
This does not affect the complexity since $\mathbf c$ has size in $O(n^2)$. 

Second, so that we can apply Proposition~\ref{prop:copyME} later, we modify the form of `extended equations' in \cite{CDE} by inserting the factor $\sep h(W)$  in the appropriate position(s). This simply increases the size of the nodes by a factor (of two). 

We run the CDE  algorithm to print an NFA (possibly empty) for each $\Phi_{\mathbf c}$, which is the rational control for an EDT0L grammar that produces all solutions as freely reduced words for elements of $F(S)$ which correspond to solutions as $(\lambda,\mu)$-quasigeodesics to the same system $\Phi_{\mathbf c}$ in the hyperbolic group. 
If $(w_1,\dots, w_m)$ is a solution in canonical representatives to $\Phi$ then $(u_1,\dots, u_m,\dots u_{|\mathcal Y|})$  will be included in the  solution to  $ \Phi_{\mathbf c}$ output by the CDE algorithm, with $u_i$ the reduced forms of $w_i$ for $1\leq i\leq m$. This shows completeness once we union the grammars from all systems $\Phi_{\mathbf c}$ together.


Adding a new start node with edges to each of the start nodes of the NFA's with label $\mathbf c$, we obtain a rational control for the EDT0L grammar generating $L$ as required.
The space required is exactly that required by the CDE algorithm on input $O(n^2)$, which is $\NSPACE(n^2\log n)$.
 \end{proof}

\section{Reduction from  hyperbolic with torsion to virtually free groups}\label{sec:torsion}

In the case of a hyperbolic group $G$  with torsion, the general approach of Rips and Sela can still be applied, but the existence of canonical representatives is not always guaranteed (see Delzant \cite[Rem.III.1]{Delzant}). To get around this, Dahmani and Guirardel `fatten' the Cayley graph $\Gamma(G,S)$ of $G$ to a larger graph $\mathcal K$ which contains $\Gamma(G,S)$ (in fact $\Gamma(G,S)$ with midpoints of edges included), and solve equations in $G$ by considering equalities of paths in  $\mathcal{K}$. 
 More precisely, $\mathcal{K}$ is the $1$-skeleton of the barycentric subdivision of a Rips complex of $G$ (see Appendix~\ref{appendix:torsion}
  for definitions).

\begin{definition} \label{Kpaths}
Let $\gamma, \gamma'$ be paths in $\mathcal{K}$.
\begin{itemize}
\item[(i)] We denote by $i(\gamma)$ the initial vertex of $\gamma$, by $f(\gamma)$ the final vertex of $\gamma$, and by $\overline{\gamma}$ the reverse of $\gamma$ starting at $f(\gamma)$ and ending at $i(\gamma)$.  
\item[(ii)] We say that $\gamma$ is \emph{reduced} if it contains no backtracking, that is, no subpath of length $2$ of the form $e \overline{e}$. 
\item[(iii)] We write $\gamma \gamma'$ for the concatenation of $\gamma$, $\gamma'$ if $i(\gamma')=f(\gamma)$. 
\item[(iv)] Two paths in $\mathcal{K}$ are \textit{homotopic} if one can obtain a path from the other by adding or deleting backtracking subpaths. Each homotopy class has a unique reduced representative. 
\end{itemize}
\end{definition}

Let $V$ be the set of all homotopy classes $[\gamma]$ of paths $\gamma$ in $\mathcal{K}$ with $i(\gamma)=1_G$, and $f(\gamma)\in G$. For $[\gamma]$, $[\gamma'] \in V$ define their product $[\gamma][\gamma']=[\gamma^v\gamma']$, where $\gamma ^v\gamma'$ denotes the concatenation of $\gamma$ and the translate $^v\gamma'$ of $\gamma'$ by $v=f(\gamma)$, and let $[\gamma]^{-1}$ be the homotopy class of $^{v^{-1}}\overline{\gamma}$. Then $V$ is a group that projects onto $G$ by the final vertex map $f$, that is,
 $f:V \twoheadrightarrow G$ is a surjective homomorphism. Moreover, since $G$ has an action on $\mathcal{K}$ induced by the natural action on its Rips complex, $V$ will act on $\mathcal{K}$ as well. This gives rise to an action of $V$ onto the universal cover $T$ (which is a tree) of $\mathcal{K}$, and \cite[Lemma 9.9]{DG} shows that the quotient $T/V$ is a finite graph (isomorphic to $\mathcal{K}/G$) of finite groups, and so $V$ is virtually free.
 
 We assume that the algorithmic construction (see \cite[Lemma 9.9]{DG}) of a presentation for $V$ is part of the preprocessing of the algorithm, will be treated as a constant, and will not be included in the complexity discussion.

The first step in solving a system $\Phi$ of equations in $G$ is to translate $\Phi$ into identities between quasigeodesic paths (with start and end point in $G$) in $\mathcal{K}$, defined as $\mathcal{Q}\mathcal{G}_{\lambda_1, \mu_1}(V)$ in Equation~(\ref{qgK})
 paths which can be seen as the analogues of the canonical representatives from the torsion-free case. This can be done by Proposition 9.8 \cite{DG}. (see Proposition \ref{prop:K}). 
The second step in solving $\Phi$ is to express the equalities of quasigeodesic paths in $\mathcal{K}$ in terms of equations in the virtually free group $V$ based on $\mathcal{K}$. Finally, Proposition 9.10 \cite{DG} 
shows it is sufficient to solve the systems of equations in $V$ in order to obtain the solutions of the system $\Phi$ in $G$. 

In the virtually free group $V$ we will use the results from \cite{DEarxiv}. Let $Y$  be the generating set of $V$ and $T\subseteq Y^*$ the set of normal forms for $V$ over $Y$ as in  \cite[Remark 44, page 50]{DEarxiv},
and let $$\text{Sol}_{T,V}(\Psi)=\{w_1\#\dots\# w_m )\in T^n\mid (\pi(w_1),\dots,\pi(w_m) )\text{ solves } \Psi \text{ in } V\}$$ be the language of $T$-solutions in $V$ of a system $\Psi$ of size $|\Psi|=O(k)$; by \cite{DEarxiv} the language $\text{Sol}_{T,V}(\Psi)$ consists of ($\lambda_Y, \mu_Y$)-quasigeodesics and is EDT0L in $\NSPACE(k^2\log k)$ 
over $Y$.



\begin{proposition}\label{prop:MAIN-torsion} 
Let $G$ be a  hyperbolic group with torsion, with  finite symmetric generating set $S$.   Let $ \Phi$ be a system of equations and inequations with $|\Phi|=n$ as in Section~\ref{sec:notationSolns}. Let $h:S\to S',\#,\sep$  be as in Proposition~\ref{prop:shortlex-hyp}.
Then there exist $\lambda\geq 1, \mu\geq 0 $ and 
\[L=\{w_1\#\dots\# w_m\sep h(v_1) \#\dots \# h(v_m) \mid w_i, v_i\in Q_{S,\lambda,\mu}, w_i=_Gv_i, 1\leq i \leq m\} \]
 such that $\{w\mid  w\sep h(v)\in L\}$ is a covering solution for $\Phi$, and $L$ is  ET0L  in $\NSPACE(n^4\log n)$.
\end{proposition}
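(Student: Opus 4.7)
The plan is to parallel the proof of Proposition~\ref{prop:MAIN-torsionfree} step by step, substituting canonical representatives in $\Gamma(G,S)$ by $(\lambda_1,\mu_1)$-quasigeodesic paths in \DG's fattened complex $\mathcal K$, and the CDE algorithm for free groups by the DEarxiv algorithm for virtually free groups applied to $V$. After the same preprocessing (replacing inequations using a fresh variable $x_D$ with the constraint $x_D\neq_G1$, then triangulating $\Phi$ into $q\in O(n)$ equations $X_jY_j=Z_j$), I appeal to \cite[Prop.~9.8]{DG}: every triple of group elements solving a triangle in $G$ lifts to a triple of quasigeodesic paths in $\mathcal K$ whose central tripod has circumference bounded by a constant $\rho\in O(n)$ depending only on the group.

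I then enumerate, in lex order and one at a time, tuples $\mathbf c=(\mathbf c_1,\dots,\mathbf c_q)$ where each $\mathbf c_j$ records the central tripod data for the $j$-th triangle in $\mathcal K$. The working tape needs only $O(n^2)$ space at any moment. For each $\mathbf c$ I run a Dehn-type check in $V$ (linear space) to verify that $\mathbf c_j$ actually closes up, and if so write down the associated system $\Psi_{\mathbf c}$ of equations in $V$, analogous to~(\ref{csystem}), expressing how each of $X_j,Y_j,Z_j$ decomposes around the $j$-th tripod using auxiliary variables $P_j,Q_j,R_j$; the total size is $O(n^2)$. I invoke the DEarxiv algorithm on $\Psi_{\mathbf c}$, augmented with the rational constraints that $x_D$ does not project to $1_G$ under $f\colon V\twoheadrightarrow G$ (a constant-size DFA) and that each variable evaluates to the normal-form language $T\subseteq Y^*$. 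By \cite[Thm.~45]{DEarxiv} on input of size $k=O(n^2)$, this prints the rational control of an EDT0L grammar for $\text{Sol}_{T,V}(\Psi_{\mathbf c})$ in $\NSPACE(k^2\log k)=\NSPACE(n^4\log n)$. As before, I tag each printed node with $\mathbf c$ to keep the $\Psi_{\mathbf c}$-NFAs disjoint, insert the separator $\sep$ and the marked copy $h(\cdot)$ in the positions required for later application of Proposition~\ref{prop:copyME}, and union over $\mathbf c$ via a single fresh start node with $\epsilon$-transitions.

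Finally I convert from $V$-normal forms over $Y$ to quasigeodesic words over $S$. Since each letter of $Y$ corresponds to a path of bounded length in $\mathcal K$, and $\mathcal K$ sits inside a subdivision of $\Gamma(G,S)$, there is a fixed length-bounded homomorphism $Y^*\to S^*$ whose composition with $\pi$ equals $f$; applying it on both sides of $\sep$ and invoking Proposition~\ref{prop:closureET0L} preserves the ET0L class and the space bound, and the resulting images automatically satisfy $w_i=_Gv_i$ and lie in $Q_{S,\lambda,\mu}$ for suitable constants $\lambda,\mu$ depending only on $G$. Soundness is immediate since $f$ sends any $V$-solution to a $G$-solution; completeness is \cite[Prop.~9.10]{DG}, which guarantees that every $G$-solution of $\Phi$ lifts to a $V$-solution of $\Psi_{\mathbf c}$ for some tuple $\mathbf c$ enumerated in our algorithm.

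The main obstacle is the careful setup of $\Psi_{\mathbf c}$, the generating set $Y$, and the normal-form alphabet $T$ so that (i) the projection $f$ is realised by a length-bounded letter-to-word substitution, (ii) the constraint that variables evaluate to $V$-quasigeodesic normal forms pushes forward to $(\lambda,\mu)$-quasigeodesicity in $S^*$, and (iii) the blow-up from input size $n$ to $|\Psi_{\mathbf c}|=O(n^2)$, combined with DEarxiv's $k^2\log k$ space bound, is tracked cleanly to deliver the claimed $\NSPACE(n^4\log n)$; in contrast with the torsion-free case the alphabets for the equation-solving step and for the target quasigeodesic words no longer coincide, which is precisely why the output is only guaranteed to be ET0L rather than EDT0L.
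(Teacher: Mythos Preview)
Your overall architecture is right, but there is a genuine gap at the step where you pass from $V$ back to $G$. You assert that applying the letter-to-word homomorphism $\sigma\colon Y^*\to S^*$ to the $T$-normal forms ``automatically'' yields words in $Q_{S,\lambda,\mu}$. This is false. The surjection $f\colon V\twoheadrightarrow G$ has a large kernel (essentially $\pi_1(\mathcal K)$, a free group), and nothing in the system $\Psi_{\mathbf c}$ forces a $V$-solution $v_z$ to lie in $\mathcal{QG}_{\lambda_1,\mu_1}(V)$: the auxiliary variables $P_j,Q_j,R_j$ may take values that wind arbitrarily far in the kernel direction. For such $v_z$ the $T$-normal form is a long $(\lambda_Y,\mu_Y)$-quasigeodesic word over $Y$, but $f(v_z)$ may be a short element of $G$, so $\sigma$ of that normal form is a long $S$-word representing a short group element --- not a quasigeodesic. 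Consequently the language you produce is not contained in the $L$ of the statement, and Proposition~\ref{prop:shortlex-hyp} cannot be invoked afterwards.

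The paper closes this gap with two extra moves you omit. First it \emph{expands} $\mathrm{Sol}_{T,V}(\Psi_{\mathbf c})$ to the set $\mathcal{Q}\mathrm{Sol}_{T,V}(\Psi_{\mathbf c})$ of \emph{all} $(\lambda'_1,\mu'_1)$-quasigeodesic $Y$-words representing solutions, by applying Proposition~\ref{prop:shortlex-hyp} to the virtually free group $V$ itself (this is where inverse homomorphism enters and is the real reason the output drops from EDT0L to ET0L --- not the alphabet mismatch you cite). Then, after applying $\sigma$, it \emph{intersects} with the regular set $Q_{S,\lambda_G,\mu_G}$. Completeness survives both steps because of Lemma~\ref{VtoG}, which says that when the reduced $\mathcal K$-path of $\pi(w)$ is quasigeodesic (as it is for the specific lifts $v_z\in\mathcal{QG}_{\lambda_1,\mu_1}(V)$ supplied by Proposition~\ref{prop:V}), then $\sigma(w)$ is a $(\lambda_G,\mu_G)$-quasigeodesic over $S$. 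Without something playing the role of Lemma~\ref{VtoG} and the final intersection, your argument does not establish the stated form of $L$.

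A smaller but related point: your ``Dehn-type check in $V$'' for the central tuple is the wrong condition. Proposition~\ref{prop:V} only guarantees $f(c_1c_2c_3)=1_G$, not $c_1c_2c_3=1_V$, so checking triviality in $V$ would discard tuples you need for completeness. The paper performs the check in $G$ via $\sigma(v_1)\sigma(v_2)\sigma(v_3)=_G1$ and Dehn's algorithm there. (Also, $\mathcal K$ does not sit inside a subdivision of $\Gamma(G,S)$; it is the other way around.)
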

Again applying Proposition~\ref{prop:shortlex-hyp}  immediately gives Theorem~\ref{thmTorsion}.

%
%
%

Before proving Proposition~\ref{prop:MAIN-torsion} we need to show how one can translate between elements and words in $V$ over the generating set $Y$, and elements and words in $G$ over $S$ via the graph $\mathcal{K}$, so that the EDT0L characterisation of languages is preserved.
\begin{notation} \label{notationZ}
Let $Z$ be some generating set of $V$ and let $\pi\colon Z^* \to V$ be the standard projection map from words to group elements in $V$. 
\begin{enumerate}
\item[(i)] For each $z_i \in Z$ there exists a unique reduced path $p_i$ in $\mathcal{K}$ with $i(p_i)=1_G$ and $f(p_i) \in G$; by concatenation 
 for each word $w=z_{i_1} \dots z_{i_k}$ over $Z$ there is then a unique path denoted 
 \vspace{-.5cm}
\begin{equation}\label{unreduced}
p_w=p_{i_1} \dots p_{i_k}
\end{equation}
 with $i(p_w)=1_{\mathcal{K}}=1_G$ and $f(p_w) \in G$. 

\item[(ii)] For each $z_i\in Z$, assign a geodesic path $\gamma_i$ in the Cayley graph $\Gamma(G,S)$ such that  $i(\gamma_i)=1_G$ and $f(\gamma_i)=f(p_i) \in G$, where $p_i$ as in (i). Let $\sigma\colon Z^* \to S^*$ be the map/substitution given by $\sigma(z_i)=\gamma_i$; by concatenation one can associate to each word $w=z_{i_1} \dots z_{i_k}$ over $Z$ a path in $\Gamma(G,S)$ denoted 
\begin{equation}\label{Gpath}
\gamma_w=\gamma_{i_1} \dots \gamma_{i_k}=\sigma(w)
\end{equation}
 with $i(p_w)=1_G$ and $f(\gamma_w)=f(w) \in G$. 

\item[(iii)] There exists a unique reduced path, denoted $p_{\pi(w)}$, which is homotopic to $p_w$.

\end{enumerate}
\end{notation}


\begin{proof*}{Proof of Proposition~\ref{prop:MAIN-torsion}}
The algorithm to produce the language of solutions for $\Phi$ is  similar to that outlined in Section \ref{sec:overview} and detailed in the proof
of Proposition \ref{prop:MAIN-torsionfree}, 
but it applies to different groups. The triangulation of $\Phi$ and introduction of a variable with rational constraint to deal with the inequations proceeds in the same manner. 
Again, we 
suppose after preprocessing we have $q\in O(n)$ triangular equations.

Then for $\kappa\in O(n)$ as in Proposition~\ref{prop:V}.
define $V_{\leq \kappa}=\{[\gamma] \in V\mid \gamma \textrm{\ reduced\ and\ } \ell_{\mathcal{K}}(\gamma)\leq \kappa \}.$ One lifts the system $\Phi$ in $G$ to a finite set of systems $\Psi_{\mathbf c}$ 
 in the virtually free group $V$, one system for each $q$-tuple $\mathbf{c}$ of triples $(c_1, c_2, c_3)$ with $c_i \in V_{\leq \kappa}$ and such that $f(c_1c_2c_3)=1_G$, as in 
 Proposition \ref{prop:V}.
  We enumerate these tuples by enumerating triples of words $(v_1,v_2,v_3)$ over the generating set $Y$ of $V$ with $\ell_Y(v_i)\leq \kappa_Y$, where $\kappa_Y\in O(q)$ is a constant depending on $\kappa$, as in Lemma \ref{kappaV}(ii). 
 By Lemma \ref{kappaV}(ii) the tuples of path triples $(p_{v_1},p_{v_2},p_{v_3})$ (see (\ref{unreduced})) in $\mathcal{K}$ contain all $q$-tuples of triples $(c_1, c_2, c_3)$ with $c_i \in V_{\leq \kappa_{Y}}$, 
 up to homotopy. Then for each triple $(v_1,v_2,v_3)$ we check whether $f(v_1v_2v_3)=1_G$, and this is done by checking whether $\sigma(v_1)\sigma(v_2)\sigma(v_3)=_G 1$ using the Dehn algorithm in $G$.

Then each system $\Psi_{\mathbf{c}}$ is obtained as in (\ref{csystem}) in the proof of Proposition \ref{prop:MAIN-torsionfree} and has input size $O(q^2)\in O(n^2)$ since it has $O(q)$ equations, each of length in $O(q)$, and the factors $c_{i}$ inserted also have length  in $O(q)$. For each system $\Psi_{\mathbf{c}}$ over $V$ we apply the algorithm in \cite{DEarxiv} and obtain the set of solutions $\text{Sol}_{T,V}(\Psi_{\mathbf{c}})$ as an EDT0L in $\NSPACE((q^2)^2\log (q^2))= \NSPACE(n^4\log n)$ of $(\lambda_Y,\mu_Y)$-quasigeodesics over $Y$. 

Now let $\mathcal{Q}\text{Sol}_{T,V}(\Psi_{\mathbf{c}})$ be the set of all $(\lambda'_1,\mu'_1)$-quasigeodesics which represent solutions of $\Psi_{\mathbf{c}}$ in $V$ over $Y$. By Proposition \ref{prop:shortlex-hyp} this language is ET0L and by Corollary \ref{cor:KtoY} it contains at least one word over $Y$ for each solution in $\mathcal{Q}\mathcal{G}_{\lambda_1, \mu_1}(V)$. 

Then $\sigma(\mathcal{Q}\text{Sol}_{T,V}(\Psi_{\mathbf c}))$ is ET0L since ET0L languages are preserved by substitutions, and by Prop.
\ref{prop:V}
 $\mathcal{S}=\cup_{\mathbf{c}} \sigma(\mathcal{Q}\text{Sol}_{T,V}(\Psi_{\mathbf{c}}))$ contains $\text{Sol}_{G}(\Phi)$,
 so it is a covering solution set of $\Phi$. By Lemma \ref{VtoG} the set $\mathcal{S}$ consists of at least one $(\lambda_G, \mu_G$)-quasigeodesic over $S$ for each solution, and then by intersection with the regular set  $Q_{S,\lambda_G, \mu_G}$ of quasigeodesics in $G$ over $S$ we obtain a set of solutions for $\Phi$ consisting of ($\lambda_G, \mu_G$)-quasigeodesics.

Finally, we run the modified DE algorithm (inserting the additional $\sep h(W)$ and label $\mathbf c$ for each node printed) to print an NFA for each $\Phi_{\mathbf c}$ for the EDT0L grammar for $\text{Sol}_{T,V}(\Psi_{\mathbf{c}})$, 
which we union using an extra start node as before.
From the above work this grammar generates the language $L$ as required.
\end{proof*}

\begin{lemma}\label{kappaV}
\begin{enumerate}
\item[(i)] If $c \in V$ and the reduced path representing $c$ in $\mathcal{K}$ is an $(a,b)$-quasigeodesic, then there exists a word $w$ on $Y$ representing $c$ such that $w$ is an $(a',b')$-quasigeodesic, where $a',b'$ depend on $a,b$ and $Y$.
\item[(ii)] If $c \in V$ and the length of the reduced path representing $c$ in $\mathcal{K}$ is $\leq L$,
 then there exists a word $w$ on $Y$ representing $c$ such that $\ell_Y(w)\leq L_Y$, where $L_Y$ depends on $L$ and $Y$.  
\end{enumerate}
\end{lemma}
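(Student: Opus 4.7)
The plan is to apply the Svarc--Milnor lemma to the action of $V$ on the universal cover $T$ of $\mathcal{K}$, and then to translate lengths of reduced paths in $\mathcal{K}$ into distances in $T$. I will first verify that $V$ acts properly, cocompactly and by isometries on $T$ (equipped with the natural edge-length-$1$ metric). Cocompactness follows from the fact that $T/V \cong \mathcal{K}/G$ is finite, and properness follows because the induced graph-of-groups decomposition of $V$ has finite vertex groups, so all stabilisers are finite. The Svarc--Milnor lemma then produces constants $\lambda_0 \geq 1$ and $\mu_0 \geq 0$, depending only on the action (hence only on $Y$), such that for any basepoint $x_0 \in T$ lying over $1_G$, the orbit map $c \mapsto c \cdot x_0$ is a $(\lambda_0, \mu_0)$-quasi-isometry from $(V, d_Y)$ to $(T, d_T)$.

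Next I will observe that a reduced path $\gamma$ in $\mathcal{K}$ starting at $1_G$ lifts uniquely to a reduced path $\widetilde{\gamma}$ in $T$ starting at $x_0$, because the covering projection is a local homeomorphism. Since $T$ is a tree, a reduced path is automatically a geodesic, so
\[
\ell_{\mathcal{K}}(\gamma) \;=\; \ell_T(\widetilde{\gamma}) \;=\; d_T\bigl(x_0,\, c \cdot x_0\bigr),
\]
where $c = [\gamma] \in V$ and I have used that $c$ sends $x_0$ to $f(\widetilde{\gamma})$.

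For part (ii), if $\ell_{\mathcal{K}}(\gamma) \leq L$ then $d_T(x_0, c \cdot x_0) \leq L$, so the quasi-isometry yields $d_Y(1, c) \leq \lambda_0 (L + \mu_0)$; any geodesic word $w$ over $Y$ representing $c$ then has $\ell_Y(w) \leq L_Y := \lambda_0 (L + \mu_0)$.

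For part (i), I will again take $w$ to be a geodesic word over $Y$ representing $c$. A standard triangle-inequality argument shows that every subword of a geodesic word is itself geodesic, so $w$ is a $(1,0)$-quasigeodesic in the Cayley graph $\Gamma(V, Y)$, hence trivially an $(a', b')$-quasigeodesic for any $a' \geq 1$, $b' \geq 0$. If, instead, the statement is meant to require $a', b'$ to genuinely reflect the constants $a, b$ of $\gamma$ (for instance, because one needs quasi-geodesicity of the associated path $p_w$ in $\mathcal{K}$ rather than of $w$ in $\Gamma(V, Y)$), I would push the quasi-geodesic inequality for $\gamma$ through the Svarc--Milnor quasi-isometry, obtaining $a', b'$ as explicit linear functions of $a, b, \lambda_0, \mu_0$. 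The only obstacle I foresee is pinning down the intended reading of ``$w$ is a quasigeodesic''; once that is fixed, the remainder is routine bookkeeping with the Svarc--Milnor constants.
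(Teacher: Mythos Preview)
The paper states this lemma without proof, so there is no argument to compare against directly. Your Svarc--Milnor approach is correct and is the natural one: $V$ acts properly (finite vertex stabilisers) and cocompactly ($T/V\cong\mathcal{K}/G$ is finite) by isometries on the tree $T$, so the orbit map is a quasi-isometry $(V,d_Y)\to(T,d_T)$; and since the lift $\widetilde\gamma$ of a reduced path $\gamma$ to a tree is a geodesic, one has $\ell_{\mathcal K}(\gamma)=d_T(x_0,c\cdot x_0)$, which yields (ii) at once. For (i), your observation that a $Y$-geodesic word for $c$ is already a $(1,0)$-quasigeodesic is valid and suffices for the only use the paper makes of (i) (the corollary immediately following), so the stated dependence of $a',b'$ on $a,b$ is indeed vacuous under the literal reading.

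The one step worth an extra sentence is the claim that $c\cdot x_0$ equals the endpoint of $\widetilde\gamma$. This is how the $V$-action on $T$ is built---as the group of all lifts of the $G$-action on $\mathcal K$ through the covering $T\to\mathcal K$, with $\ker f=\pi_1(\mathcal K,1_G)$ acting by deck transformations---but since $V$ is not simply the fundamental group of $\mathcal K$, you should say explicitly why the lift of $\gamma$ based at $x_0$ computes the orbit point. Once that is spelled out, the argument is complete.
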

\begin{corollary}\label{cor:KtoY}
For any element $v \in \mathcal{Q}\mathcal{G}_{\lambda_1, \mu_1}(V)$ there is a $(\lambda'_1, \mu'_1)$-quasigeodesic word over $Y$ representing $v$, where $\lambda'_1, \mu'_1$ depend on $\lambda_1, \mu_1$ and $Y$.
\end{corollary}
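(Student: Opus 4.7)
The plan is to deduce Corollary \ref{cor:KtoY} as an essentially immediate consequence of Lemma \ref{kappaV}(i). Unpacking the definition of $\mathcal{Q}\mathcal{G}_{\lambda_1, \mu_1}(V)$ (the set introduced in Equation~(\ref{qgK}), identifying paths in $\mathcal{K}$ with their homotopy classes in $V$), an element $v \in \mathcal{Q}\mathcal{G}_{\lambda_1, \mu_1}(V)$ is an element of $V$ whose unique reduced representative path in $\mathcal{K}$ is a $(\lambda_1, \mu_1)$-quasigeodesic. Setting $a = \lambda_1$ and $b = \mu_1$, Lemma \ref{kappaV}(i) then produces constants $\lambda'_1 := a'$ and $\mu'_1 := b'$ depending only on $\lambda_1$, $\mu_1$ and the generating set $Y$, together with a word $w$ over $Y$ with $\pi(w) = v$ whose reading in the Cayley graph $\Gamma(V, Y)$ is a $(\lambda'_1, \mu'_1)$-quasigeodesic.

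The real content of the corollary is the \emph{uniformity} of the constants across the whole set $\mathcal{Q}\mathcal{G}_{\lambda_1, \mu_1}(V)$. This uniformity is built into the statement of Lemma \ref{kappaV}(i), since the output pair $(a', b')$ is asserted to depend on $(a, b)$ and $Y$ alone and not on the particular element $c$. Therefore the same pair $(\lambda'_1, \mu'_1)$ serves for every $v \in \mathcal{Q}\mathcal{G}_{\lambda_1, \mu_1}(V)$, because the input pair $(\lambda_1, \mu_1)$ is fixed across the set.

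Consequently there is no real obstacle in the argument: the proof is a one-line application of Lemma \ref{kappaV}(i), with the only verification being that the constants can be chosen uniformly in $v$, which is automatic. All the substantive geometric work — comparing combinatorial path length in $\mathcal{K}$ to word length over the finite generating set $Y$ of the virtually free group $V$ — is already absorbed into Lemma \ref{kappaV}(i).
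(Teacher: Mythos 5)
Your proof is correct and matches the paper's own (implicit) reasoning: the paper presents the statement as a corollary immediately following Lemma~\ref{kappaV} with no written proof, precisely because it is, as you observe, an instance of Lemma~\ref{kappaV}(i) applied with $(a,b)=(\lambda_1,\mu_1)$, the uniformity of $(\lambda'_1,\mu'_1)$ over $\mathcal{Q}\mathcal{G}_{\lambda_1,\mu_1}(V)$ being already built into the lemma. No further comment is needed.
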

\begin{lemma}\label{VtoG}
Let $w$ be a ($\lambda'_1, \mu'_1$)-quasigeodesic word over $Y$. Then if the reduced path $p_{\pi(w)}$ is $(a,b)$-quasigeodesic in $\mathcal{K}$ the (unreduced) path $p_{w}$ is $(a_{\mathcal{K}},b_{\mathcal{K}})$-quasigeodesic in $\mathcal{K}$, where $(a_{\mathcal{K}},b_{\mathcal{K}})$ depend on $a,b, \lambda'_1, \mu'_1$ and $Y$. 

Moreover, $\sigma(w)$ is a $(\lambda_G,\lambda_G)$-quasigeodesic over $S$ in the hyperbolic group $G$, where $\lambda_G,\lambda_G$ depend on $\lambda_1, \mu_1$ and $Y$.
\end{lemma}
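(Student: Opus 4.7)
The plan is to prove both claims in turn, using the universal cover $T$ of $\mathcal{K}$ (which is a tree, since $V$ acts on it with finite quotient $T/V \cong \mathcal{K}/G$) as a bookkeeping device. Two constants depending only on $Y$ will be used: $M_0 := \max_{z \in Y} \ell_{\mathcal{K}}(p_z)$, which gives $\ell_{\mathcal{K}}(p_u) \leq M_0 \ell_Y(u)$ for every word $u \in Y^*$, and the constants $K, K'$ from Lemma~\ref{kappaV}(ii) providing $d_Y(1,c) \leq K \ell_{\mathcal{K}}(p_{\pi(c)}) + K'$ for every $c \in V$. Together these say that the $Y$-word metric on $V$ and the $\mathcal{K}$-length of reduced-path representatives are comparable up to multiplicative and additive constants.

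For part 1, fix an arbitrary subpath $q$ of $p_w$. First I will trim $q$ by at most $M_0$ at each end so that its endpoints land at $G$-vertices of $\mathcal{K}$, obtaining a subpath $p_{v'}$ where $v'$ is a subword of $w$; the trimming costs only $O(1)$ in both path length and endpoint distance. Since quasigeodesicity passes to subwords, I can chain the inequalities $\ell_{\mathcal{K}}(p_{v'}) \leq M_0 \ell_Y(v') \leq M_0 \lambda'_1 d_Y(1,\pi(v')) + M_0 \mu'_1$ and then apply Lemma~\ref{kappaV}(ii) to replace $d_Y(1,\pi(v'))$ by a linear function of $\ell_{\mathcal{K}}(p_{\pi(v')})$. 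The crucial final step is to bound $\ell_{\mathcal{K}}(p_{\pi(v')})$ linearly in $d_{\mathcal{K}}(i(p_{v'}),f(p_{v'}))$, which is where the hypothesis on $p_{\pi(w)}$ must enter. For this I lift to $T$: $\tilde p_{\pi(v')}$ is the unique $T$-geodesic between the lifts of $i(p_{v'}),f(p_{v'})$ along $\tilde p_w$, and $\tilde p_w$ is a walk in the tree $T$ whose simple-path spine (from $\tilde 1$ to $\pi(w)\tilde 1$) is $\tilde p_{\pi(w)}$. Tree geometry then decomposes $\tilde p_{\pi(v')}$ into at most three pieces: two bounded excursions off the spine and a central segment lying inside $\tilde p_{\pi(w)}$. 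Projecting down, $p_{\pi(v')}$ is a bounded perturbation of a subpath of $p_{\pi(w)}$, and the $(a,b)$-quasigeodesic hypothesis on $p_{\pi(w)}$ together with the triangle inequality gives the required bound on $\ell_{\mathcal{K}}(p_{\pi(v')})$. Assembling the inequalities produces $a_\mathcal{K}, b_\mathcal{K}$ with $\ell(q) \leq a_\mathcal{K} d_{\mathcal{K}}(i(q),f(q)) + b_\mathcal{K}$.

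For part 2, $\sigma(w) \in S^*$ is the concatenation of geodesics $\gamma_{i_j}$ in $\Gamma(G,S)$ of bounded length $\leq M_1$, giving $\ell_S(\sigma(w)) \leq M_1 \ell_Y(w)$. An analogous trimming reduces any subpath of $\sigma(w)$ to $\sigma(v')$ for a subword $v'$ of $w$, which shares its $G$-endpoints with $p_{v'}$. Applying part 1 to $p_{v'}$ controls $\ell_{\mathcal{K}}(p_{v'})$ linearly in $d_{\mathcal{K}}$-distance between the endpoints, while the inequality $\ell_{\mathcal{K}}(p_{v'}) \geq \ell_Y(v')$ (each letter contributes at least one edge of $\mathcal{K}$) bounds $\ell_Y(v')$ in terms of $d_{\mathcal{K}}$. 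The standard quasi-isometry between $\Gamma(G,S)$ and $\mathcal{K}$ (both being thick models of the hyperbolic group $G$) then converts the $d_{\mathcal{K}}$ bound into a $d_S$ bound on the endpoints, yielding $\ell_S(\sigma(v')) \leq \lambda_G d_S(i(\sigma(v')),f(\sigma(v'))) + \mu_G$.

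The main obstacle is the tree-geometry step in part 1: making rigorous the decomposition of the $T$-geodesic $\tilde p_{\pi(v')}$ as a bounded perturbation of a subsegment of the spine $\tilde p_{\pi(w)}$, so that the $(a,b)$-quasigeodesic hypothesis on $p_{\pi(w)}$ can actually be brought to bear on $p_{\pi(v')}$. Once that tree-level relationship is pinned down, the rest of the argument is routine bookkeeping via the compatible quasi-isometries among $V$, $T$, $\mathcal{K}$, and $\Gamma(G,S)$.
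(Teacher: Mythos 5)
Your proposal correctly identifies the right ingredients (the constant $M_0$, Lemma~\ref{kappaV}(ii), the hereditary quasigeodesicity of $w$ over $Y$, and the $(a,b)$-hypothesis on $p_{\pi(w)}$), and the chain of inequalities in steps~2--4 of your Part~1 is sound. But there is a genuine gap exactly where you flag ``the main obstacle'': you assert that the two excursions of $\tilde p_{\pi(v')}$ off the spine $\tilde p_{\pi(w)}$ are \emph{bounded}, and nothing in your argument justifies this. The excursion from a point $\tilde x$ on $\tilde p_w$ to its projection on the spine has length equal to the depth of the backtrack of $p_w$ containing $\tilde x$; bounding this is precisely the nontrivial content of the lemma, not a freebie of tree geometry. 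Without it, the central segment of $\tilde p_{\pi(v')}$ need not be a bounded perturbation of a subsegment of the spine, and the $(a,b)$-hypothesis cannot be applied.

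The paper closes this gap by working directly with maximal backtracks of $p_w$ in $\mathcal{K}$: a maximal backtrack $s_i$ based at a $G$-vertex $A$ corresponds to a subword $v$ of $w$ with $\pi(v)=1_V$; since $v$ is a quasigeodesic word over $Z=Y\cup V_{\leq 3}$ representing the identity, $\ell_Z(v)\leq\mu_Z$, forcing $\ell_{\mathcal{K}}(s_i)\leq M\mu_Z$. When $A\notin G$ they pass to a nearby $G$-vertex (this is the role of the extra generators $V_{\leq 3}$, which your proof omits). In fact your own chain of inequalities, applied to this backtrack subword $v$ (whose reduced path $p_{\pi(v)}$ is trivial), would yield the same bound $\ell_{\mathcal{K}}(p_v)\leq M_0(\lambda'_1K'+\mu'_1)$ and hence bound the excursion — but you never make this observation, so the step is missing. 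You would also need the $A\notin G$ case. As written, the proposal is an outline that identifies where the $(a,b)$-hypothesis must enter but does not actually deliver the bounded-excursion (equivalently, bounded-backtrack) estimate, which is the crux of the lemma.
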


\begin{proof}
Consider the generating set $Z=Y \cup V_{\leq 3}$ for $V$ and let $\lambda_Z, \mu_Z$ be such that any $(\lambda'_1, \mu'_1)$-quasigeodesic over $Y$ is $(\lambda_Z, \mu_Z)$-quasigeodesic over $Z$. Let $M=\max\{l_{\mathcal{K}}(p_y) \mid y \in Y\}$. That is, $M$ is the maximal length of a generator in $Y$ with respect to the associated reduced path length in $\mathcal{K}$. We will show the statement in the lemma holds for $(a_{\mathcal{K}},b_{\mathcal{K}})=(a,b+M\mu_Z)$.

 We say that a subpath $s_w$ of $p_w$ is a maximal backtrack if $p_w=p s_w p'$, $s_w$ is homotopic to an empty path (via the elimination of backtrackings), and $s_w$ is not contained in a longer subpath of $p_w$ with the same property. This implies there is a point $A$ on $p_w$ such that $s_w$ starts and ends at $A$, and such a maximal backtrack traces a tree in $\mathcal{K}$.  
We can then write $p_w=a_1s_1 a_2 \dots s_{n-1}a_n$, where $a_i$ are (possibly empty) subpaths of $p_w$ and $s_i$ are maximal backtracks; thus $p_{\pi(w)}=a_1a_2 \dots a_n$. If $l_{\mathcal{K}}(s_i) \leq M\mu_Z$ for all $i$, then the result follows immediately. Otherwise there exists an $s_i$ with $l_{\mathcal{K}}(s_i)>M\mu_Z$, and we claim that we can write $s_i$ in terms of a word over $Z$ that is not a quasigeodesic, which contradicts the assumption that $w$ is quasigeodesic.

To prove the claim, suppose $i(s_i)=f(s_i)=A$. We have two cases: in the first case $A\in G$ then $\pi(s_i)=_V 1$ and $s_i$ corresponds to a subword $v$ of  $w$ for which $l_{\mathcal{K}}(p_{v})\geq M\mu_Z$. But $v$ represents a word over $Z$, so $l_{\mathcal{K}}(p_{v}) \leq l_Z(v) M$, and altogether 
$ M\mu_Z \leq l_{\mathcal{K}}(p_{v}) \leq  l_Z(v) M.$
Since $|v|_Z=0$ and $v$ is a ($\lambda_Z, \mu_Z$)-quasigeodesic word over $Z^*$, $l_Z(v) \leq \mu_Z$, which contradicts $l_Z(v) \geq \mu_Z$ from above.

In the second case $A\notin G$, so take a point $B \in G$ at distance $1$ from $A$ in $\mathcal{K}$ (this can always be done), and modify the word $w$ to get $w'$ over $Z$ so that $p_{w'}$ in $\mathcal{K}$ includes the backtrack $[AB,BA]$ off the path $p_w$. Also modify $s_i$ to obtain a new backtrack $s'_i$. Clearly $\pi(p_w)=\pi(p_{w'})$ and $\pi(s_i)=\pi(s'_i)$, and $s'_i$ becomes a maximal backtrack of $p_{w'}$ which can be written as a word over the generators $Z$ that represents the trivial element in $V$. We can the apply the argument from the first case.

The fact that $\sigma(w)$ is a $(\lambda_G,\mu_G)$-quasigeodesic over $S$ in the hyperbolic group $G$ follows from the fact that $\mathcal{K}$ and $\Gamma(G,S)$ are quasi-isometric.
\end{proof}

\bibliographystyle{plainurl}
\bibliography{CiobanuElderIcalp2019}

\appendix
\section{Additional material for Section~\ref{sec:EDT0L}}\label{appendix:EDT0L}

%
%

\begin{proof*}{Proof of Proposition~\ref{prop:projection}}
Since the grammar produces exactly $s-1$ $\#$ symbols, we can deterministically modify the grammar to label each symbol $\#_1,\dots,\#_{s-1}$ in the same space complexity.
Replace each  letter  $c\in C$ by $c^{ij}$ for all $1\leq i\leq j\leq s$ to indicate that the word produced by $c$ will lie in the factor(s)  $u_t, i\leq t\leq j$.
 The new start symbol is  $(S_0)^{1,s}$. For each rule $(c,v)$, if $v$ contains no symbols $\#_t$ then 
replace $(c,v)$  by $(c^{ij},v^{ij})$ for all $1\leq i\leq j\leq s$. If $v=v_0\#_{i_1}v_1 \dots v_{r-1}\#_{i_r}v_r$, 
 for all $i,j$ with $i\leq i_1, j\geq i_r$, replace $(c,v)$ by $(c^{ij},v_0^{i,i_1-1}\#_{i_1}v_1^{i_1,i_2} \dots v_{r-1}^{i_{r-1}i_r}\#_{i_r}v_r^{i_r,j})$
for all $k\leq i_1$, $\ell\geq i_r$. Note these modifications are deterministic so preserve EDT0L. 
%
 \end{proof*}


\section{Additional material for Section~\ref{sec:hyp-intro}}\label{appendix:hyp-intro}

\begin{proof*}{Proof of Proposition~\ref{prop:shortlex-hyp}}
Let $\$$ denote a `padding symbol' that is distinct from $S\cup S'\cup\{\#,\sep\}$, and let 
 \[\mathcal X=\left\{{s\choose t}, {s'\choose t'}, {s\choose \$}, {\$\choose s},{s'\choose \$}, {\$\choose s'},  {\#\choose \#} {\sep\choose \sep}  \mid s,t\in S, s',t'\in S' \right\}.\]
 Define a homomorphism $\psi\colon \mathcal X^*\to (S\cup S'\cup\{\#\})^*$ by 
 \[ \psi\left({x\choose y}\right)=\left\{\begin{array}{llll} 1 & \ \ & x=\$\\x & & x\in S\cup S'\cup\{\#,\sep\}\end{array}\right.\]  
 Then $L_2=\psi^{-1}(L_1)$ is ET0L in \NSPACE$(f(n))$ by Proposition~\ref{prop:closureET0L}. $L_2$ consists of strings of letters which we can view as two parallel strings: the top string is a word from $L_1$ with $\$$ symbols inserted, and the bottom string can be any word in $S\cup S'\cup\{\#,\sep,\$\}$ with $\#,\sep$ occurring  in exactly the same positions as the top string, and by construction there are no two $\$$ symbols in the same position top and bottom.
 
 Let ${\mathcal M}$ be the asynchronous 2-tape automaton which accepts all pairs $(u,v)\in Q_{S,\lambda,\mu}$ 
 with $u=_Gv$ as in Proposition~\ref{prop:reg-hyp}.  For each accept state, add  loops  back to  the start state labeled ${\#\choose \#}$. 
 The new automaton ${\mathcal M}_1$ accepts padded pairs of  $(\lambda, \mu)$-quasigeodesic words  $(u_i,v_i)$ with  $u_i=_Gv_i$,  and each pair is separated by ${\#\choose \#}$. 
 Now make a copy ${\mathcal M}_1'$ of ${\mathcal M}_1$ placing a prime on each letter $s\in S$, and join the accept states of ${\mathcal M}_1$ to the start state of ${\mathcal M}_1'$ by transitions labeled ${\sep\choose \sep}$. The new automaton ${\mathcal M}_2$ accepts some number of padded pairs of words  $(u_i,v_i)\in Q_{S,\lambda,\mu}^2$,  $u_i=_Gv_i$,   followed by some number of padded pairs of words $(u_i',v_i')\in (S')^*$ which are represent $(\lambda,\mu)$-quasigeodesics in $(G,S')$, and $u_i'=_Gv_i'$. 
 
 We now take $L_3=L_2\cap L({\mathcal M}_2)$. This is again ET0L in \NSPACE$(f(n))$ by Proposition~\ref{prop:closureET0L} since $ L({\mathcal M}_2)$ is regular. The language $L_3$ can be seen as pairs of strings, the top string a padded version of a string in $L_1$ and the bottom of the form $w_1\#\dots \#w_r\sep h(z_1)\#\dots \#h(z_r)$ with $w_i=_Gu_i, z_i=_Gv_i$ and $w_i,z_i$ $\lambda,\mu$-quasigeodesics. Moreover, since by hypothesis $u_i=_Gv_i$ we have $w_i=_G z_i$ for all $1\leq i\leq r$.

By Proposition~\ref{prop:closureET0L}, projecting $L_3$ onto the bottom string and deleting $\$$ symbols, which is a homomorphism, 
shows that the language $L_Q$ is ET0L in \NSPACE$(f(n))$.  

Finally, the intersection  $L_4=L_{Q}\cap \mathcal T$ is ET0L in \NSPACE$(f(n))$ since $\mathcal T$ is regular and a subset of $Q_{S,\lambda,\mu}$.
However, since $\mathcal T$ also is in bijection with $G$, we have $w_i$ and $z_i$ are identical words, thus $L_6$ consists of  words of the form $w_1\#\dots \#w_r\sep h(w_1)\#\dots \#h(w_r)$. By Proposition~\ref{prop:copyME}, the language $L_4$ is in fact EDT0L in \NSPACE$(f(n))$, and by Proposition~\ref{prop:projection} projecting onto the prefix gives us $L_{\mathcal T}$ is EDT0L in \NSPACE$(f(n))$.
\end{proof*}


\section{Additional material for Section~\ref{sec:torsionfree}}\label{appendix:torsionfree}

In order to construct canonical representatives one first needs the concept of a \textit{cylinder} of a group element $g$, whose definition is lengthy and not needed here (see Definition 3.1 in \cite{RS95}).
Informally, a cylinder is a narrow neighbourhood of bounded thickness - depending on an integer $T$ called the \textit{criterion} - of any geodesic $[1,g]$; that is, a cylinder contains those points that are `close' to the geodesics connecting $1$ and $g$ in the Cayley graph. 
Denote the cylinder of $g\in G$ depending on $T$ by $C_T(g)$.
 
\begin{proposition}[Lemma 3.2, \cite{RS95}]\label{prop_nbhd}

Let $G$ be a $\delta$-hyperbolic group, $g\in G$ an element, $T$ a criterion, and $h \in C_T(g)$. The following hold:

(1) for every geodesic $[1,g]$, $d(h, [1,g])\leq 2\delta$,

(2) $g^{-1}h \in C_T(g^{-1})$,

(3) any point that lies on a geodesic $[1,g]$ is in $C_T(g)$.

\end{proposition}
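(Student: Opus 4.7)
The plan is to unpack the Rips--Sela definition of the cylinder $C_T(g)$ and verify each of the three items directly from it; the $\delta$-hyperbolicity of $\Gamma(G,S)$ is the only geometric input beyond the definition.

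I would begin with (3), which should be essentially built into the definition: the cylinder is constructed as a controlled neighbourhood of the family of \emph{all} geodesics from $1$ to $g$, so any vertex on such a geodesic automatically satisfies the ``criterion $T$'' conditions (e.g.\ by serving as its own phase at every height along the geodesic). This serves as the base case against which the other two parts are checked.

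For (1), the idea is to combine (3) with the standard consequence of slim triangles applied to degenerate (bigon) triangles: any two geodesics joining $1$ and $g$ are pointwise within $\delta$ of each other. A vertex $h\in C_T(g)$ is, by the defining condition of the cylinder, close to \emph{some} geodesic $[1,g]_0$; pick a vertex $x$ on $[1,g]_0$ realising this closeness, then use bigon slimness to locate $x'$ on the prescribed $[1,g]$ with $d(x,x')\leq\delta$, and add the two bounds by the triangle inequality. The one thing to check honestly is that the intrinsic ``thickness'' of the cylinder plus the slimness constant actually combine to the sharp $2\delta$ of the statement rather than a larger multiple; this should come out exactly because Rips--Sela tune their criterion to match $\delta$-hyperbolicity.

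For (2), I would exploit that left translation by $g^{-1}$ is a graph isometry of $\Gamma(G,S)$ sending the pair $(1,g)$ to $(g^{-1},1)$, and that reversing the image of a geodesic $[1,g]$ produces a geodesic $[1,g^{-1}]$. Since the cylinder is defined via local combinatorial data (phases and a criterion) that are equivariant under isometries and invariant under path reversal, the data witnessing $h\in C_T(g)$ pulls back to data witnessing $g^{-1}h\in C_T(g^{-1})$. The step to check most carefully is that the criterion $T$ is itself symmetric under reversal, so that the same numerical value works on both sides; if it is not quite symmetric, a bookkeeping adjustment (not a new idea) is needed.

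The main obstacle---indeed the only genuinely nontrivial piece---is being precise about the definition of the cylinder, which the excerpt only describes informally. All three items are then routine verifications, but the constant $2\delta$ in (1) and the exact symmetry in (2) are sensitive to how ``phases'' and the criterion are set up, so I would expect to spend the bulk of the effort chasing phases through Definition 3.1 of \cite{RS95} rather than on any new geometric argument.
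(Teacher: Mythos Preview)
The paper does not give its own proof of this proposition: it is simply quoted as Lemma~3.2 of \cite{RS95} and used as a black box (notably in the sketch justifying that canonical representatives are quasigeodesics). There is therefore nothing in the paper to compare your proposal against.

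That said, your plan is a reasonable reconstruction of how the proof in \cite{RS95} goes, and you have correctly identified the essential point: all three items are bookkeeping from the definition of $C_T(g)$, with (2) coming from $G$-equivariance and reversal symmetry of the data defining the cylinder, (3) from the fact that geodesic points trivially satisfy the defining condition, and (1) from the built-in thickness bound plus slim bigons. You are also right that the only place one could go wrong is in tracking the exact constants through Rips and Sela's Definition~3.1, which the present paper does not reproduce; without that definition in hand your argument is necessarily a sketch, but it is the correct sketch.
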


For fixed $T$ denote the cylinder of any element $g$ simply by $C(g)$. The next step towards building canonical representatives consists of `cutting' the cylinders into slices.

\begin{definition}\label{def_diff}
Let $w$ be a vertex in $\Gamma(G,S)$ and let $v \in C(w)$. The \textit{left} and \textit{right} neighborhoods of $v$ in $C(w)$ are
$$N_L^{w}(v)= \{ x \in C(w) \mid d(1, x)\leq d(1,v) \ \textrm{and} \ d(v,x)\geq 10\delta\}$$
$$N_R^{w}(v)= \{ x \in C(w) \mid d(1, x)\geq d(1,v) \ \textrm{and} \ d(v,x)\geq 10\delta\}.$$
The \textit{difference} between two elements $u$ and $v$ of $C(w)$ is then
{\small
$$diff_w(u,v)=|N_L^{w}(u)\setminus N_L^{w}(v)|-|N_L^{w}(v) \setminus N_L^{w}(u)|+|N_R^{w}(v) \setminus N_R^{w}(u)|-|N_R^{w}(u) \setminus N_R^{w}(v)|.$$}
\end{definition}

One can show that for any $u,v,t \in C(w)$
$diff_w(u,v) + diff_w(v,t)=diff_w(u,t),$ and $diff_w(u,v)=-diff_w(v,u),$
and define an equivalence relation on $C(w)$ with $u \sim v$ if and only if $diff_w(u,v)=0$. We call the equivalence classes with respect to $\sim$ \textit{slices}, and denote the slice of $v\in C(w) $
by $slice_v^w$. We say that $slice_{v_2}^w$ is \textit{consecutive} to $slice_{v_1}^w$ if $diff_w(v_2,v_1)>0$. Consecutivity between slices is well defined, and we can thus partition each cylinder into slices, which are `small', in the sense that each slice is included in a ball of radius $10\delta$.

\begin{proposition}[Proposition 3.6, \cite{RS95}]\label{prop_slice}
For any $v\in C(w)$, the diameter of $slice_v^w$ is $\leq 10\delta.$
Moreover, If $slice_{v_2}^w$ is consecutive to $slice_{v_1}^w$ and $|v_1|\geq 10 \delta$, then the diameter of $slice_{v_1}^w \cup slice_{v_2}^w$ is $\leq 20\delta +1$.
\end{proposition}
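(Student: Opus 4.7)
The plan is to recast $diff_w$ as a scalar invariant whose level sets are exactly the slices, and then control the diameters of these level sets using the $\delta$-hyperbolic geometry of $C(w)$ furnished by Proposition~\ref{prop_nbhd}.

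Using the elementary identity $|A\setminus B|-|B\setminus A|=|A|-|B|$ on both the ``$L$'' and ``$R$'' pairs appearing in the definition of $diff_w$, I get
\[ diff_w(u,v) \;=\; \bigl(|N_L^w(u)|-|N_R^w(u)|\bigr) - \bigl(|N_L^w(v)|-|N_R^w(v)|\bigr) \;=\; \phi(u)-\phi(v),\]
where $\phi(x):=|N_L^w(x)|-|N_R^w(x)|$ for $x\in C(w)$. Thus slices are precisely the level sets of the integer-valued function $\phi$, and ``$slice_{v_2}^w$ is consecutive to $slice_{v_1}^w$'' becomes $\phi(v_2)=\phi(v_1)+1$ with no value of $\phi$ on $C(w)$ strictly between.

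For the first assertion, fix $u,v\in slice_v^w$, so $\phi(u)=\phi(v)$; assume without loss of generality $d(1,u)\leq d(1,v)$, and suppose for contradiction that $d(u,v)>10\delta$. Directly from the definitions, $u\in N_L^w(v)\setminus N_L^w(u)$ and $v\in N_R^w(u)\setminus N_R^w(v)$, so substituting into the simplified form above yields
\[ 0 \;=\; \phi(v)-\phi(u) \;\geq\; 2 \,-\, |N_L^w(u)\setminus N_L^w(v)| \,-\, |N_R^w(v)\setminus N_R^w(u)|.\]
The core step is to show both of these ``bad'' symmetric-difference sets are empty. Any $x\in N_L^w(u)\setminus N_L^w(v)$ satisfies $d(1,x)\le d(1,u)\le d(1,v)$, $d(u,x)\geq 10\delta$, and, by failure of the second clause of $N_L^w(v)$, $d(v,x)<10\delta$. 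I would then invoke Proposition~\ref{prop_nbhd}(1) to project $u,v,x$ to within $2\delta$ of a fixed geodesic $[1,w]$: the smallness $d(v,x)<10\delta$ forces the projections of $v$ and $x$ to lie within $14\delta$ of each other, while $d(u,x)\geq 10\delta$ forces the projection of $x$ to lie at least $6\delta$ below that of $u$ along $[1,w]$. Combined with $d(1,x)\le d(1,u)\leq d(1,v)$ and $\delta$-slimness, these constraints contradict $d(u,v)>10\delta$. The set $N_R^w(v)\setminus N_R^w(u)$ is handled symmetrically.

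For the second assertion, I rerun the same count with $\phi(v_2)-\phi(v_1)=1$ rather than $0$, which relaxes the equality to an inequality admitting at most one excess element in the symmetric-difference bookkeeping; this translates into $d(u,u')\leq 10\delta +1$ for every $u\in slice_{v_1}^w$ and $u'\in slice_{v_2}^w$. Combining with the intra-slice bound of $10\delta$ already proved, the union $slice_{v_1}^w\cup slice_{v_2}^w$ has diameter at most $20\delta+1$. The hypothesis $|v_1|\geq 10\delta$ is needed so that the geodesic $[1,w]$ extends far enough past $v_1$ on the left for the projection argument of the previous paragraph to remain valid; near the endpoint $1$ the left neighborhood is truncated and $\phi$ ceases to be a faithful proxy for the $\delta$-hyperbolic geometry.

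The main obstacle is the core step of the first assertion: turning the three quantitative constraints $d(u,x)\geq 10\delta$, $d(v,x)<10\delta$, and $d(1,x)\leq d(1,u)\leq d(1,v)$ into a genuine contradiction requires a careful, constant-tracking use of the slim-triangle property to compare projections of $u,v,x$ on a reference geodesic $[1,w]$ and pin down the sign of $d(1,v)-d(1,x)$. The appearance of the extra $+1$ in the bound $20\delta+1$ and the hypothesis $|v_1|\geq 10\delta$ both arise from precisely the same boundary-effect subtleties.
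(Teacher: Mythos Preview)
The paper does not give its own proof of this proposition: it is quoted as Proposition~3.6 of \cite{RS95} and used as a black box in the background material of Appendix~\ref{appendix:torsionfree}. There is therefore no argument in the present paper to compare yours against.

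On the merits of your proposal, the rewriting $diff_w(u,v)=\phi(u)-\phi(v)$ with $\phi(x)=|N_L^w(x)|-|N_R^w(x)|$ is correct and clarifying. The gap is exactly where you flag it: the claim that $N_L^w(u)\setminus N_L^w(v)$ and $N_R^w(v)\setminus N_R^w(u)$ are empty does not follow from the constraints you list together with the $2\delta$-closeness to $[1,w]$ from Proposition~\ref{prop_nbhd}(1). Tracking constants along the geodesic coordinate, writing $a,b,c$ for the parameters of the projections of $u,v,x$, your hypotheses force $c\le a+4\delta$, $a\le b+4\delta$, $|a-c|\ge 6\delta$, $|a-b|>6\delta$, $|b-c|<14\delta$; combining the first four gives $b-c>12\delta$, which is compatible with $|b-c|<14\delta$. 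So no contradiction is reached, and the emptiness claim is too strong as stated. Either a sharper hyperbolic estimate than the crude $2\delta$-projection bound is needed, or the counting must be organised differently (for instance, exhibiting multiple witnesses in the favourable differences rather than forbidding all witnesses in the unfavourable ones). You are right that this is the crux, but as written it is a genuine hole rather than routine bookkeeping, and the same hole propagates into the ``$+1$'' argument for the second assertion.
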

Now let $B_{10\delta}(1)$ be the ball of radius $10 \delta$ around the identity. We define the following equivalence relation on the set of subsets of $B_{10\delta}(1)$:
$$A \sim B \textrm{\ if \ and \ only \ if \ } \exists g\in G \textrm{\ such that \ } gA=B.$$
Every equivalence class is called an \textit{atom}. Choose a set $\{A_1, \dots, A_k \}$ of representatives of all atoms in $B_{10\delta}(1)$. Since every slice $slice_v^w$ in a cylinder has bounded diameter, there exists only one $1 \leq i \leq k$ such that $slice_v^w=gA_i$ for some $g\in G$, and $g$ is unique with this property. The set $A_i$ is called the \textit{model} for $slice_v^w$.

\begin{definition}

For every atom representative $A_i$ choose an (arbitrary) point $a_i$ in $A_i$, called the \textit{center} of $A_i$ and denoted by $ce(A_i)$. Then for every slice $slice_v^w$ the center $ce(slice_v^w)$ is defined by 
$$ce(slice_v^w):= g^{-1}ce(A_i),$$
where $A_i$ is the model for $slice_v^w$ and $g$ the corresponding element sending $A_i$ to $slice_v^w$.

\end{definition}

\begin{definition}[Definition 3.9, \cite{RS95}]
For $x\in G$, $|x|_S \leq 20\delta +1$, the \textit{step} of $x$, written $st(x)$, is a chosen geodesic word on the generators $S$ such that
 $\overline{st(x)}=x$, and $st(x^{-1})=st(x)^{-1}$.
\end{definition}

By having cut the cylinders into slices, defined a `dictionary' of atoms and their centres which canonically assign a center to each slice, one can connect the centres using the fixed steps and get canonical representatives. 

\begin{figure}[h!]
\begin{center}
\includegraphics[scale=0.3]{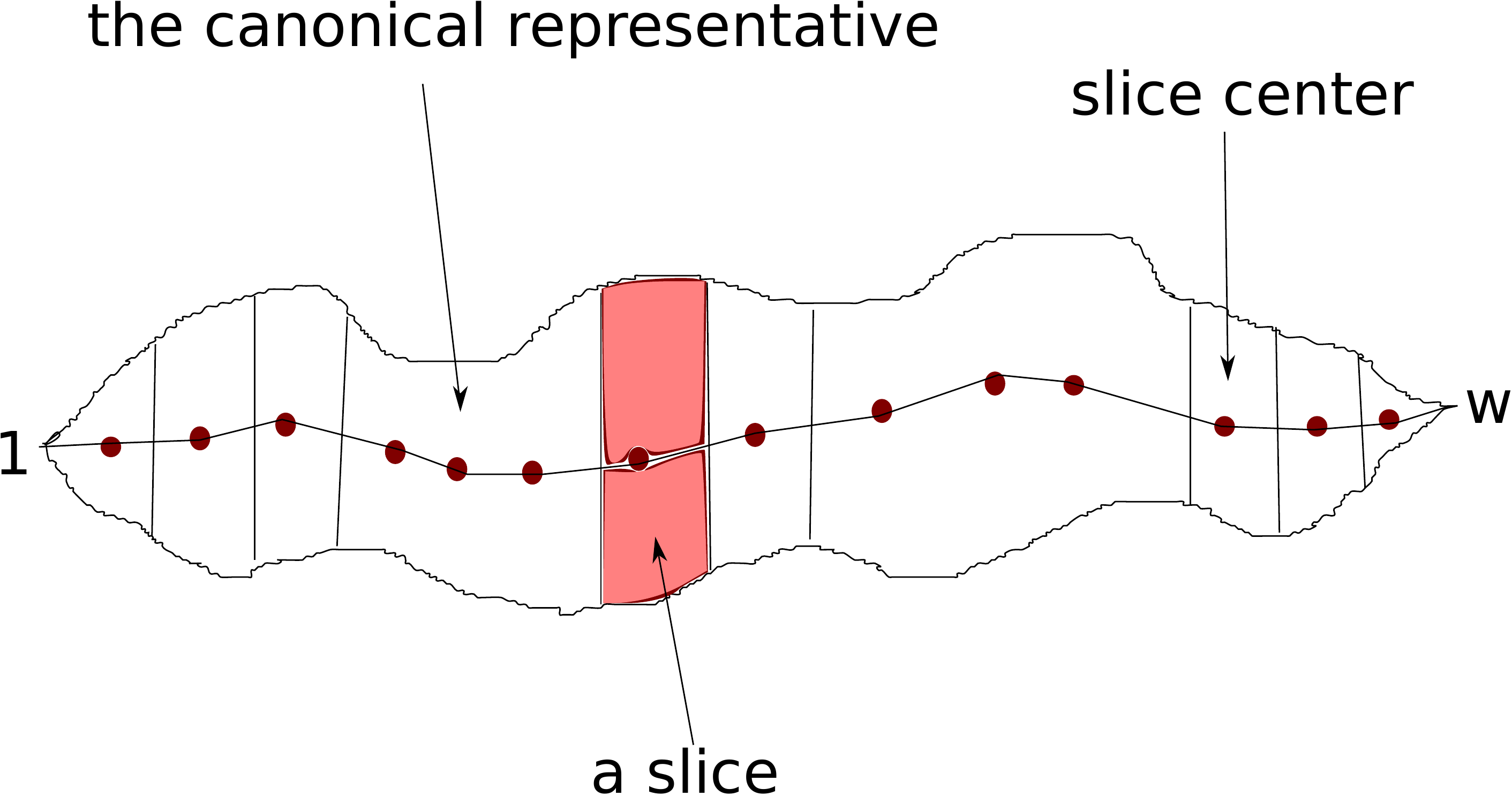}
\caption{The canonical representative of $w$}
\end{center}
\end{figure}

\begin{definition}[Definition 3.10, \cite{RS95}]\label{canrep} Let $w\in G$ be a vertex in the Cayley graph of $G$ and let $\{slice_{v_1}^w, \dots, slice_{v_m=w}^w\}$ be the sequence of consecutive slices of the cylinder $C_T(w)$ depending on criterion $T$. If $|w| \leq 10\delta$ we let $\theta_T(w):=st(w)$. Else we define the \textit{canonical representative} of $w$ to be 
$$\theta_T(w):=st(ce(slice_{v_1}^w)))\star (\star_{i=2}^m st(ce(slice_{v_{i-1}}^w)^{-1}ce(slice_{v_i}^w)))\star st(ce(slice_w^w)^{-1}).$$
\end{definition}

For a fixed $T$ we write $\theta(w)$ instead of $\theta_T(w)$. Canonical representatives satisfy $\theta(w)=_G w$ and $[\theta(w)]^{-1}=\theta(w^{-1})$, and most importantly they are combinatorially \textit{stable} (see Theorem 3.11 in \cite{RS95}); that is, if three elements $w_1, w_2, w_3$ satisfy the triangular relation $w_1w_2w_3=1$, and the cylinders corresponding to the three geodesic sides of the thin triangle agree in balls around the vertices of the triangle. 
then the canonical representatives coincide inside slightly smaller balls around the vertices of the triangle. 

Also essential is the fact that canonical representatives are ($\lambda, \mu$)-quasigeodesics, with $\lambda$ and $\mu$ depending only on $\delta$.

\begin{proposition}[see Proposition 3.4\cite{DahmaniIsrael}]\label{canrep_qg} There are constants $\lambda \geq 1$ and $\mu \geq 0$ depending only on $\delta$ such that for any criterion $T$ and any element $g \in G$ the canonical representative $\theta_T(g)$ of $g$ is a ($\lambda, \mu$)-quasigeodesic.
\end{proposition}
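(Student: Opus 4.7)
The plan is to exploit the decomposition of $\theta_T(g)$ as a concatenation of short geodesic steps between centers of consecutive slices of the cylinder $C_T(g)$. Write $\theta_T(g) = s_1 s_2 \cdots s_m$, where $s_i$ is the step joining the center of the $(i-1)$-st slice to that of the $i$-th slice (with the obvious conventions at the endpoints $1$ and $g$). By Proposition~\ref{prop_slice} two consecutive slices have combined diameter at most $20\delta + 1$, so each $\ell(s_i) \leq 20\delta + 1$ is bounded by a constant depending only on $\delta$.

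Next, I would fix a geodesic $\gamma$ from $1$ to $g$ in $\Gamma(G,S)$ and exploit two geometric facts. First, by Proposition~\ref{prop_nbhd}(1) every center lies within $2\delta$ of $\gamma$, so each vertex on $\theta_T(g)$ lies within some uniform distance $D = D(\delta)$ of $\gamma$. Second, and more crucially, I would establish that distinct centers make \emph{monotone progress} along $\gamma$: writing $v_i$ for the $i$-th center and $\pi(v_i) \in \gamma$ for a nearest point, there should exist constants $\alpha > 0$ and $\beta \geq 0$ depending only on $\delta$ with $d(1, \pi(v_{i+k})) - d(1, \pi(v_i)) \geq \alpha k - \beta$ for all $i, k \geq 0$. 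Granting both facts, the quasigeodesic inequality for a subpath $q$ running from inside step $s_i$ to inside step $s_j$ (with $i \leq j$) follows by a one-line calculation: $\ell(q) \leq (20\delta+1)(j-i+1)$, while the monotone-progress estimate together with $d(v_i, \pi(v_i)) \leq 2\delta$ gives $d(i(q), f(q)) \geq (j-i)/\alpha - \beta - 2D$; combining yields $\ell(q) \leq \lambda d(i(q), f(q)) + \mu$ for suitable $\lambda, \mu$ depending only on $\delta$.

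The main obstacle is the monotone-progress claim. The relation $\sim$ and the consecutivity of slices are defined combinatorially via the cardinalities $|N_L^{w}(u) \setminus N_L^{w}(v)|$ and $|N_R^{w}(u) \setminus N_R^{w}(v)|$ in Definition~\ref{def_diff}, and do not a priori reference projections onto $\gamma$. Translating ``$diff_w$ advances by one'' into ``$d(1, \pi(\cdot))$ advances by at least a positive constant on average'' will require a careful application of $\delta$-hyperbolicity: the fact that $C_T(g)$ is contained in a $2\delta$-neighborhood of $\gamma$ should force the combinatorial ordering of slices by $diff_w$ to agree, up to bounded additive error, with the metric ordering by $d(1, \pi(\cdot))$. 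An alternative would be to bypass projections entirely, bounding $m$ above by a linear function of $|g|_S$ (using that the cylinder has ``length'' $|g|_S$ and ``width'' $O(\delta)$) and then bounding the Cayley-graph distance between far-apart centers from below via a hyperbolicity argument; I expect a full proof to combine elements of both approaches, in the spirit of Section~3 of Rips--Sela.
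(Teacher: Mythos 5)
Your decomposition of $\theta_T(g)$ into steps $s_i$ of length at most $20\delta+1$, and your use of Proposition~\ref{prop_nbhd}(1) to keep everything within $2\delta$ of a geodesic $[1,g]$, are exactly the right starting points and match the paper. But you then correctly flag as the ``main obstacle'' the step that actually carries the proof — showing that being $n$ slices apart forces the endpoints to be $\gtrsim n$ apart in $\Gamma(G,S)$ — and you do not close it. Both of the routes you sketch (monotone progress of nearest-point projections, or a global bound on $m$ plus a local separation estimate) are left as speculation, so as written the argument has a genuine gap precisely at its core.

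The paper closes this gap by a counting argument that sidesteps projections entirely. Take centres $q_-, q_+$ separated by $n$ slices. Since $diff_g$ is additive along consecutive slices and strictly positive between them, $diff_g(q_-,q_+)\geq n$, hence at least one of the four cardinality terms in Definition~\ref{def_diff} is $\geq n/4$; say $|N_L^g(q_-)\setminus N_L^g(q_+)|\geq n/4$. All of these points lie in $C_T(g)$, which by Proposition~\ref{prop_nbhd}(1) is contained in the union of $2\delta$-balls centred along a geodesic $[1,g]$, and those contributing to the set difference between $q_-$ and $q_+$ occupy a geodesic segment of length comparable to $d(q_-,q_+)$. With $M=|B(2\delta)|$ this gives $n/4\leq M\,d(q_-,q_+)$, hence $\ell(q)\leq (20\delta+1)n\leq 4M(20\delta+1)\,d(q_-,q_+)$, which is the quasigeodesic bound with constants depending only on $\delta$. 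So rather than translating the combinatorial ordering of slices into metric progress along the geodesic (the hard reconciliation you anticipated), the paper only needs that the symmetric-difference sets are large \emph{and} live in a uniformly thin neighbourhood of a geodesic; the volume bound then does the work. If you want to complete your proposal, this counting step is the missing ingredient you should supply.
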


Proposition 3.4\cite{DahmaniIsrael} uses different terminology and applies to the more general setting of relatively hyperbolic groups, which makes it more involved than our case, so we sketch here briefly the proof of Proposition \ref{canrep_qg}, without insisting on the exact constants $\lambda$ and $\mu$, just showing their dependence on $\delta$ only. Recall that a canonical representative $p=\theta_T(g)$ for $g$ consists, as a path, of the concatenation of short ($\leq 20\delta +1$) geodesics between the centres of slices of a cylinder, and by Proposition \ref{prop_slice} each slice is contained in a ball of radius $10\delta$. So without loss of generality we may take $q_{-}$ and $q_{+}$ in 
to be centres separated by $n$ slices in the decomposition of a cylinder. Then $\ell ( q ) \leq (20\delta +1)n$ by the triangle inequality. By Definition \ref{def_diff} we have $diff(q_{-},q_{+})\geq n$, so at least one of the terms in $diff(q_{-},q_{+})$ is greater than $n/4$. Suppose $|N_L^{g}(q_{-})\setminus N_L^{g}(g_{+})| \geq n/4.$ Then by Proposition \ref{prop_nbhd} (1) a cylinder is contained in the union of balls of radius $2\delta$ centred on a geodesic between $1$ and $g$ that intersects $N_L^{g}(q_{-})\setminus N_L^{g}(g_{+})$, so if we let $M=|B(2\delta)|$, we get that $d(q_{-},q_{+})M \geq |N_L^{g}(q_{-})\setminus N_L^{g}(g_{+})| \geq n/4$. This shows that $n \leq 4 d(q_{-},q_{+})M$, so $\ell ( q ) \leq (20\delta +1)4 d(q_{-},q_{+})M$ and therefore $p$ is a quasigeodesic.

We also state the result below, due to Rips and Sela in \cite{RS95}, which is instrumental in getting the solutions in torsion-free hyperbolic groups from free groups. Let us first mention that $\lambda(\Phi):=\lambda(\Phi)(\delta, q)$ is a constant depending on $\delta$ and linearly on $q$, and its explicit formula can be found on page 502 in \cite{RS95}.

 \begin{theorem}[Theorem 4.2, Corollary 4.4 \cite{RS95}]\label{CylStab}
 Let $G$ be a torsion-free $\delta$-hyperbolic group generated by set $S$, and let $\Phi$ be a system of equations of the form 
 $$z_{i(j,1)}z_{i(j,2)}z_{i(j,3)}=1,$$
 where $1\leq j \leq q$. There exists an effectively computable set of criteria $\{T_1, \dots, T_{\lambda(\Phi)} \}$ and a constant $p:=p(\delta, q)$ depending on $\delta$ and linearly on $q$ such that:
 
 If $(g_1, \dots, g_l) \in \text{Sol}_G(\Phi)$, then there exists an index $m_0 \in \{1, \dots, \lambda(\Phi) \}$ and there exist $y_a^j, c_a^j \in F(S)$, $1 \leq j \leq q, 1\leq a\leq 3$, such that $|c_a^j| \leq p$ and $c_1^jc_2^jc_3^j=_G 1$, for which
  $$\theta_{T_{m_0}}(g_{i(j,1)})=y_1^jc_1^j (y_2^j)^{-1},$$ 
   $$\theta_{T_{m_0}}(g_{i(j,2)})=y_2^jc_2^j (y_3^j)^{-1},$$
    $$\theta_{T_{m_0}}(g_{i(j,3)})=y_3^jc_3^j (y_1^j)^{-1}.$$
   \end{theorem}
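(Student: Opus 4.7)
The plan is to follow Rips and Sela's original strategy, leveraging the combinatorial stability of canonical representatives on top of the geometry of $\delta$-thin triangles. For each equation $z_{i(j,1)}z_{i(j,2)}z_{i(j,3)}=1$, the evaluation under the solution $(g_1,\dots,g_l)$ produces a closed geodesic triangle in $\Gamma(G,S)$, and $\delta$-hyperbolicity forces an insize decomposition of each side into two ``corner'' pieces fellow-travelling with the neighbouring sides together with a central piece of diameter $O(\delta)$ that crosses the thin-triangle centre. I would then replace each geodesic side by the canonical representative $\theta_T(g_{i(j,a)})$, which is a $(\lambda,\mu)$-quasigeodesic by Proposition \ref{canrep_qg} and thus stays within a bounded neighbourhood of the corresponding geodesic side.

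The advantage of $\theta_T$ over a raw geodesic is its combinatorial rigidity: by Proposition \ref{prop_slice} each cylinder $C_T(g_{i(j,a)})$ is partitioned into slices of diameter $\le 10\delta$, every slice is $G$-equivalent to one of finitely many model atoms, and $\theta_T$ is obtained by concatenating chosen steps between the centres of consecutive slices. Two slices that sit on fellow-travelling geodesics therefore have matching models, so one can select the criterion $T$ so that the centres chosen on the two sides meeting at a corner coincide. For a single triangle this yields a factorisation $\theta_T(g_{i(j,a)})=y_a^j c_a^j (y_{a+1}^j)^{-1}$ (indices mod $3$), in which the $y_a^j$ encode the common corner prefixes/suffixes and the $c_a^j$ trace the central crossing; the identity $c_1^j c_2^j c_3^j=_G 1$ then follows automatically because this product is a closed loop through the central region.

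The main obstacle, and the reason the theorem quantifies over a family of criteria rather than a single $T$, is enforcing this corner-matching simultaneously across all $q$ triangles when a single variable $z_i$ can appear in several equations. A fixed $T$ assigns one slice structure to each $g_i$, and this structure must be compatible with the chosen corners in every triangle in which $g_i$ participates. Rather than engineering a single universal $T$, I would enumerate a finite, effectively computable pool $\{T_1,\dots,T_{\lambda(\Phi)}\}$ designed (by shifting slice boundaries through a controlled finite collection of offsets) so that for any configuration of $q$ consistent triangles at least one $T_{m_0}$ realises all the required matchings at once. The count $\lambda(\Phi)$ grows linearly in $q$ because each further triangle adds only a constant number of new alignment choices, and the bound $p=p(\delta,q)$ on $|c_a^j|$ reflects an $O(\delta)$ contribution per triangle plus a linear accumulation when a variable's slice structure has to be locally adjusted to accommodate several adjacent corners. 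Invoking Theorem~3.11 of \cite{RS95} (the combinatorial stability statement not re-proved in this excerpt) packages precisely this matching property, so the argument reduces to the explicit book-keeping of criteria, slice offsets, and the resulting central-piece lengths.
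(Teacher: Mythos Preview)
The paper does not prove this theorem: it is quoted verbatim as Theorem~4.2 and Corollary~4.4 of \cite{RS95} and used as a black box (the surrounding text says ``We also state the result below, due to Rips and Sela in \cite{RS95}''). There is therefore no paper-proof to compare your proposal against.

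Your sketch is a reasonable high-level account of the Rips--Sela argument itself: thin-triangle decomposition, replacement of geodesics by canonical representatives, slice/atom rigidity to force prefixes and suffixes to literally coincide at corners, and a pigeonhole over a finite pool of criteria to achieve simultaneous consistency across all $q$ triangles. You correctly identify that the substantive content is Theorem~3.11 of \cite{RS95} (combinatorial stability), which you invoke rather than reprove. One caution on the heuristics: the linear dependence of $p$ on $q$ in \cite{RS95} does not come from ``accumulating local adjustments'' but from the pigeonhole argument over criteria---one needs $\lambda(\Phi)$ many candidate criteria (linear in $q$) so that for at least one of them the stability zones around all $3q$ corners are large enough, and the size of that pool feeds back into the bound on the central pieces. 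But since the paper treats the whole statement as an imported result, none of this is at odds with anything the paper does.
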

 
Now suppose $(g_1, \dots, g_l) \in \text{Sol}_G(\Phi)$. Then by Theorem \ref{CylStab} there is a tuple $(w_1, \dots, w_l)$, $w_a \in S^*$ a canonical representative of $g_a$ which has the form $y_a^jc_a^j (y_{a+1}^j)^{-1}$ with $y_a^j, c_a^j \in F(S)$, and all the $y_a^j$ are solutions to one of the systems $\Psi_i$ in Step 3 of the algorithm. Thus to any solution of $\text{Sol}_G(\Phi)$ there is at least one tuple of words in $\mathcal{S}$ which projects to it.

\section{Additional material for Section~\ref{sec:torsion}}\label{appendix:torsion}

For any metric space $X$ and constant $d$, the {\em Rips complex} with parameter $d$ is the simplicial complex whose
vertices are the points of $X$ and whose simplices are the finite subsets of $X$ whose diameter is at
most $d$. More specifically, we will need the Rips complex which is based on the Cayley graph of a hyperbolic group, as follows.

\begin{definition}Let $G$ be a hyperbolic group with finite generating set $X$. For a fixed constant $d$, the Rips complex $\mathcal{P}_d(G)$ is a simplicial complex defined as the collection of sets with diameter (with respect to the $X$-distance in the Cayley graph of $G$) less than $d$:
$$\mathcal{P}_d(G):=\{Y\subset G \mid Y\neq \emptyset, \textrm{ \ diam}_X(Y) \leq d\},$$
where each set $Y \in \mathcal{P}_d(G)$ of cardinality $k+1$ is identified with a $k$-simplex whose vertex set is $Y$.
\end{definition}

The Rips complex of a hyperbolic group has several important properties, some of which are relevant to this paper. The group $G$ acts properly discontinuously on the Rips complex $\mathcal{P}_d(G)$, the quotient $\mathcal{P}_d(G)/G$ is compact, and $\mathcal{P}_d(G)$ is contractible. The fact that $\mathcal{P}_d(G)/G$ is compact is essential to showing the group of paths $V$ in Section \ref{sec:torsion} is a finite graph of finite groups, and therefore virtually free.

\begin{definition}[Barycentric subdivision]

\begin{enumerate}\item[(i)] For a simplex $\tau=\{v_0, v_1, \dots, v_q\}$ of dimension $q$ in Euclidean space, we define its \emph{barycentre} to be the point $b_{\tau}:= \frac{1}{q+1}(v_0+ \dots v_q)$. 

\item[(ii)] For two simplices $\alpha, \beta$ in Euclidean space we write $\alpha<\beta$ to denote that $\alpha$ is a face of $\beta$. 

\item[(iii)] The \emph{barycentric subdivision} $B_{\sigma}$ of a simplicial complex $\sigma$ is the collection of all simplices whose vertices are $b_{\sigma_0}, \dots, b_{\sigma_r}$ for some sequence $\sigma_0 < \dots <\sigma_r$ in $\sigma$. Thus the set of vertices in $B_{\sigma}$ is the set of all barycentres of simplices of $\sigma$, $B_{\sigma}$ has the same dimension as $\sigma$, and any vertex in $B_{\sigma} \setminus \sigma$ is connected to a vertex in $\sigma$.
\end{enumerate}
\end{definition}

Let $P_{50\delta}(G)$ be Rips complex whose set of vertices is $G$, and whose simplices are subsets of $G$ of diameter at most $50\delta$. Then let $\mathcal{B}$ be the barycentric subdivision of $P_{50\delta}(G)$ and let $\mathcal{K}:=\mathcal{B}^1$ the $1$-skeleton of $\mathcal{B}$. Thus by construction the vertices of $\mathcal{K}$ (and $\mathcal{B}$) are in $1$-to-$1$ correspondence with the simplices of $P_{50\delta}(G)$, so we can identify $G$, viewed as a set of $0$-simplices in $\mathcal{B}$, with a subset of vertices of $\mathcal{K}$.  

\begin{remark}\label{rmkKG} By construction, the graphs $\mathcal{K}$ and $\Gamma(G,S)$ are quasi-isometric, and any vertex in $\mathcal{K}$ that is not in $G$ is at distance one (in $\mathcal{K}$) from a vertex in $G$.
\end{remark}

\begin{proposition}[Proposition 9.8, \cite{DG}]\label{prop:K}
Let $G$ be a hyperbolic group, $\mathcal{K}$ as above, and $\Phi$ a system of triangular equations. Let $\lambda_0$ a constant depending on $\delta$, $\mu_0=8$ and $b$ a constant depending on $\delta$ and the number of equations in $\Phi$.

 If $(g_1, \dots, g_l) \in \text{Sol}_G(\Phi)$, then for any variable $z$ and corresponding solution $g_z$ there exists a $(\lambda_0,\mu_0)$-quasigeodesic path $\gamma_z$ joining $1$ to $g_z$ in $\mathcal{K}$ such that $\gamma_{z^{-1}}=^{g_z^{-1}}\overline{\gamma_z}$ and for each equation $z_1z_2z_3=1$ there is a decomposition of the paths $\gamma_{z_i}$ into subpaths:
 $$\gamma_{z_i}=l_{z_i} c_{z_i} r_{z_i}, \ \ \ l_{z_{i+1}}=^{g_{z_i}^{-1}}\overline{r_{z_i}}$$
 where $i+1$ is computed modulo $3$, and $l(c_{z_i})\leq b$.
 \end{proposition}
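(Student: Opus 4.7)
The plan is to adapt the Rips--Sela canonical representative machinery (reproduced in Appendix \ref{appendix:torsionfree}) to the fattened graph $\mathcal{K}$. The whole point of introducing $\mathcal{K}$ is that the torsion obstruction flagged by Delzant for canonical representatives in $\Gamma(G,S)$ is circumvented once one allows paths to pass through the barycentric vertices which do not correspond to group elements. Because $\mathcal{K}$ is quasi-isometric to $\Gamma(G,S)$ (Remark \ref{rmkKG}) and $G$ still acts geometrically on it, all the metric tools used in Rips--Sela transfer, but triangles whose ``centres'' would otherwise concentrate on torsion fixed points can now be arranged to sit on non-group vertices, removing the pathology.

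First I would construct the paths $\gamma_z$, mimicking Definition \ref{canrep}. Pick a criterion $T$, define cylinders in $\mathcal{K}$ around $\mathcal{K}$-geodesics from $1_G$ to $g_z$, partition them into slices of diameter $\leq 10\delta$ using $\mathrm{diff}$ as in Definition \ref{def_diff} (the definition makes sense in any graph), choose atom representatives and centres, and connect consecutive centres by short geodesic ``steps'' in $\mathcal{K}$. The resulting path $\gamma_z$ from $1_G$ to $g_z$ is $(\lambda_0,\mu_0)$-quasigeodesic by the argument sketched after Proposition \ref{canrep_qg}: the number of slices is controlled linearly by $\mathrm{diff}$, each slice contributes $\mathcal{K}$-length at most $20\delta+1$, while the $\mathcal{K}$-distance between centres of well-separated slices grows linearly in the slice count. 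The involution $\gamma_{z^{-1}} = {}^{g_z^{-1}}\overline{\gamma_z}$ is obtained by choosing atoms, centres and steps symmetrically under $x\mapsto x^{-1}$, exactly as in Rips--Sela. The constant $\mu_0=8$ falls out of the specific $\mathcal{K}$-length bounds on steps and slice diameters.

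Second, for each triangular equation $z_1 z_2 z_3 = 1_G$ I would establish triangular stability analogous to Theorem \ref{CylStab}. The three translated paths $\gamma_{z_1},\gamma_{z_2},\gamma_{z_3}$ bound a $(\lambda_0,\mu_0)$-quasigeodesic triangle in $\mathcal{K}$, which is $\delta'$-thin for some $\delta'$ depending only on $\delta$ and the quasi-isometry constants. Hence the cylinders of the three sides coincide in $\mathcal{K}$-balls of large radius about each vertex, so the canonically chosen atom data forces identical slice centres, and hence identical step sequences, inside slightly smaller balls. This produces the desired factorisation $\gamma_{z_i} = l_{z_i} c_{z_i} r_{z_i}$ with $l_{z_{i+1}} = {}^{g_{z_i}^{-1}}\overline{r_{z_i}}$, and the middle portion $c_{z_i}$ lies in a neighbourhood of the triangle's thin centre whose $\mathcal{K}$-diameter is bounded in terms of $\delta$ alone; tracking how adjacent triangles share variables and propagate their centres then gives $\ell(c_{z_i})\leq b$ with $b = b(\delta,q)$ depending on the number $q$ of equations, matching the dependence required downstream by Proposition \ref{prop:MAIN-torsion}.

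The main obstacle will be porting the divergence/stability estimate to $\mathcal{K}$: the Rips--Sela proof exploits that two geodesics in $\Gamma(G,S)$ emanating from a common vertex and diverging by more than a threshold can never $10\delta$-approximate each other again, and this must be re-established for $\mathcal{K}$-cylinders with the subtlety that non-group vertices of $\mathcal{K}$ are $\mathcal{K}$-close to several $G$-translates of nearby simplices, so the identification of atoms up to $G$-translation is more delicate than in the Cayley graph. Once that divergence statement is secured, finiteness of $\mathcal{K}/G$ ensures only boundedly many atom types occur, and the linear-in-$q$ dependence of $b$ follows by summing the bounded centre-contributions over all $q$ triangles. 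Verifying that non-group vertices do not break the coherence of atom choices, and that the symmetry $\gamma_{z^{-1}} = {}^{g_z^{-1}}\overline{\gamma_z}$ survives this whole process, is where I expect the technical effort to concentrate.
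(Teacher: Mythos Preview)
The paper does not prove this proposition at all: it is stated as a quotation of Proposition~9.8 from \DG\ \cite{DG} and used as a black box, with no argument supplied in the present paper. There is therefore nothing in the paper to compare your proposal against.

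That said, your sketch is broadly in the spirit of what \DG\ actually do---they build canonical representatives in $\mathcal K$ rather than in $\Gamma(G,S)$ precisely to avoid the torsion obstruction, and the slice/atom/centre machinery is indeed transported to $\mathcal K$ with care taken over the $G$-action on barycentric vertices. But your write-up is a plan, not a proof: the key technical content lies exactly where you flag the ``main obstacle'', namely showing that atoms and centres can be chosen coherently and $G$-equivariantly on $\mathcal K$ despite non-group vertices having nontrivial stabilisers, and that the resulting representatives are stable across all $q$ triangles simultaneously. None of this is carried out, and the specific constant $\mu_0=8$ cannot be justified at the level of detail you give. If the intent is to supply an independent proof rather than cite \cite{DG}, substantially more work is required; if not, the correct move here is simply to cite \cite[Proposition~9.8]{DG}, as the paper does.
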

 
 For $\lambda_0, \mu_0$ as in Proposition \ref{prop:K}, let $\lambda_1, \mu_1$ be such that any path $\alpha \gamma \alpha'$ in $\mathcal{K}$ is a $(\lambda_1, \mu_1)$-quasigeodesic, where $l(\alpha)=l(\alpha')=1$ and $\gamma$ is a $(\lambda_0, \mu_0)$-guasi-geodesic. Then define 
\begin{equation}\label{qgK}
\mathcal{Q}\mathcal{G}_{\lambda_1, \mu_1}(V):=\{[\gamma] \in V \mid \gamma \textrm{\ is a reduced \ } (\lambda_1, \mu_1)-\textrm{quasigeodesic in \ } \mathcal{K}\},
\end{equation}
and for any $L>0$ let $V_{\leq L}:=\{[\gamma] \in V\mid \gamma \textrm{\ reduced\ and\ } \ell_{\mathcal{K}}(\gamma)\leq L \}.$

 \begin{proposition}[Proposition 9.10, \cite{DG}]\label{prop:V}
Let $G$ be a hyperbolic group, $\mathcal{K}$, $\lambda_1, \mu_1$ as above, and $\Phi$ a system of triangular equations. Let $\kappa$ be a constant that depends on $\delta$ and the number of equations. 
\begin{enumerate}
\item[(i)] If $(g_1, \dots, g_l) \in \text{Sol}_G(\Phi)$, then for any variable $z$ and corresponding solution $g_z \in G$ there exists $v_{z} \in \mathcal{Q}\mathcal{G}_{\lambda_1, \mu_1}(V)$ such that $f(v_{z})=g_z$ and for each equation $\mathcal{E}$, written $z_1z_2z_3=1$, there exist $l_i:=l_{z_i, \mathcal{E}} \in \mathcal{Q}\mathcal{G}_{\lambda_1, \mu_1}(V)$ and $c_i:=c_{z_i,\mathcal{E}} \in V_{\leq \kappa}$ such that
$$ v_{g_{z_i}}= l_i c_i l^{-1}_{i+1} \in V,$$
where $i+1$ is computed modulo $3$, and $f(c_1 c_2 c_3)=1$ in $G$.
\item[(ii)] Conversely, given any family of elements $v_z$ of $V$, for each variable $z$ in $\mathcal{E}$, and $l_i:=l_{z_i, \mathcal{E}} \in V$ and $c_i:=c_{z_i,\mathcal{E}} \in V_{\leq \kappa}$ such that $ v_{z_i}= l_i c_i l^{-1}_{i+1} \in V$ and $f(c_1 c_2 c_3)=_G1$, then $g_z=f(v_z)$ is a solution of the system $\mathcal{E}$ in $G$.

\end{enumerate}
 \end{proposition}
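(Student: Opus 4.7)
The plan is to deduce this proposition from Proposition~\ref{prop:K}, which gives the corresponding statement at the level of paths in $\mathcal{K}$. Part (ii) is a direct computation pushing down along the surjective homomorphism $f\colon V\twoheadrightarrow G$, so most of the work lies in part (i): lifting the path-level data produced by Proposition~\ref{prop:K} into elements of $V$, and verifying the quasigeodesic and length bounds.

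For (i), given a solution $(g_1,\dots,g_l)\in\text{Sol}_G(\Phi)$, I would apply Proposition~\ref{prop:K} to obtain for each variable $z$ a $(\lambda_0,\mu_0)$-quasigeodesic path $\gamma_z$ in $\mathcal{K}$ from $1$ to $g_z$, and for each triangular equation $\mathcal{E}\colon z_1z_2z_3=1$ decompositions $\gamma_{z_i}=l_{z_i}c_{z_i}r_{z_i}$ with $\ell_{\mathcal{K}}(c_{z_i})\leq b$ and $l_{z_{i+1}}={}^{g_{z_i}^{-1}}\overline{r_{z_i}}$. I would then set $v_z$ to be the homotopy class of the reduced form of $\gamma_z$; since $g_z\in G$ and reducing a path never lengthens it, $v_z\in V$ and $v_z\in\mathcal{Q}\mathcal{G}_{\lambda_1,\mu_1}(V)$.

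The decomposition $\gamma_{z_i}=l_{z_i}c_{z_i}r_{z_i}$ must now be translated into a factorisation in $V$. Because the product in $V$ is $[\alpha][\beta]=[\alpha\cdot{}^{f(\alpha)}\beta]$, I would define $l_i:=[l_{z_i}]$, $c_i:=[{}^{f(l_{z_i})^{-1}}c_{z_i}]$, and $r_i^{V}:=[{}^{f(l_{z_i}c_{z_i})^{-1}}r_{z_i}]$, so that $v_{z_i}=l_i\,c_i\,r_i^{V}$ in $V$. If an intermediate endpoint of the decomposition lies at a vertex of $\mathcal K\setminus G$, I would use Remark~\ref{rmkKG} to prepend or append a single edge landing in $G$, absorbing the boundary edge into a neighbouring factor; this is exactly the reason $\lambda_1,\mu_1$ are chosen (in the paragraph preceding (\ref{qgK})) to tolerate unit-length extensions of $(\lambda_0,\mu_0)$-quasigeodesics, so that $l_i\in\mathcal{Q}\mathcal{G}_{\lambda_1,\mu_1}(V)$ and $c_i\in V_{\leq\kappa}$ for $\kappa:=b+2$. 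The key identity $r_i^{V}=l_{i+1}^{-1}$ then follows from the relation $l_{z_{i+1}}={}^{g_{z_i}^{-1}}\overline{r_{z_i}}$ by a direct computation: reversal and $G$-translation of paths commute, and unwinding $l_{i+1}^{-1}=[{}^{f(l_{z_{i+1}})^{-1}}\overline{l_{z_{i+1}}}]$ together with the endpoint equation $f(l_{z_{i+1}})=g_{z_i}^{-1}f(l_{z_i}c_{z_i})$ reproduces exactly the path representing $r_i^{V}$. Thus $v_{z_i}=l_i c_i l_{i+1}^{-1}$. Finally, multiplying the three decompositions for a triangular equation and applying $f$, the $l_i$ factors telescope to give $1_G=g_{z_1}g_{z_2}g_{z_3}=f(l_1)\,f(c_1c_2c_3)\,f(l_1)^{-1}$, hence $f(c_1c_2c_3)=1_G$.

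For part (ii), applying the homomorphism $f$ to $v_{z_i}=l_i c_i l_{i+1}^{-1}$ and multiplying cyclically over $i=1,2,3$ makes the $f(l_i)^{\pm1}$ factors cancel in pairs, yielding $f(v_{z_1}v_{z_2}v_{z_3})=f(l_1)\,f(c_1c_2c_3)\,f(l_1)^{-1}=1_G$, which says exactly that $(f(v_z))_z$ solves each equation of $\Phi$. The main technical obstacle is the bookkeeping around endpoints that need not lie in $G$: the intermediate vertices of $\gamma_{z_i}=l_{z_i}c_{z_i}r_{z_i}$ are barycentres of Rips simplices and may sit outside $G$, so defining $l_i$ and $c_i$ as honest elements of $V$ requires the short-edge corrections of Remark~\ref{rmkKG}, applied coherently across all three sides of every triangle while simultaneously preserving quasigeodesity. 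The enlargement $(\lambda_0,\mu_0)\rightsquigarrow(\lambda_1,\mu_1)$ was designed precisely to absorb these unit-length adjustments, which is what allows the lift to $V$ to succeed.
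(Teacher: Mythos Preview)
The paper does not give its own proof of this statement: it is quoted verbatim as Proposition~9.10 of \cite{DG} and used as a black box, exactly as Proposition~\ref{prop:K} is quoted as Proposition~9.8 of \cite{DG}. So there is nothing in the present paper to compare your argument against.

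That said, your derivation of Proposition~\ref{prop:V} from Proposition~\ref{prop:K} is the natural one and is essentially how the result is obtained in \cite{DG}. Lifting the path decomposition $\gamma_{z_i}=l_{z_i}c_{z_i}r_{z_i}$ to a factorisation in $V$, adjusting the two interior break points to nearby $G$-vertices via Remark~\ref{rmkKG}, and absorbing the unit-length corrections into the passage $(\lambda_0,\mu_0)\rightsquigarrow(\lambda_1,\mu_1)$ and $b\rightsquigarrow\kappa=b+2$ is exactly why those constants are introduced in the paragraph before~(\ref{qgK}). The telescoping computations for $f(c_1c_2c_3)=1_G$ and for part~(ii) are correct as written.

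The one place that deserves a sentence more than you gave it is the \emph{coherence} of the endpoint adjustments. The relation $l_{z_{i+1}}={}^{g_{z_i}^{-1}}\overline{r_{z_i}}$ forces the interior break point between $c_{z_i}$ and $r_{z_i}$ to be $g_{z_i}$ times the break point between $l_{z_{i+1}}$ and $c_{z_{i+1}}$; so when you replace these vertices by nearby $G$-vertices you must choose them compatibly (pick the $G$-neighbour $B_i'$ of the $c$--$r$ break point first, then set the $l$--$c$ break point on side $i+1$ to be $g_{z_i}^{-1}B_i'$, which is automatically in $G$). With that bookkeeping made explicit, your identity $r_i^{V}=l_{i+1}^{-1}$ goes through, and the proof is complete.
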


\end{document}